\begin{document}
	
	\newcommand{\INVISIBLE}[1]{}

	\newtheorem{thm}{Theorem}[section]
	\newtheorem{lem}[thm]{Lemma}
	\newtheorem{cor}[thm]{Corollary}
	\newtheorem{prp}[thm]{Proposition}
	\newtheorem{conj}[thm]{Conjecture}
	
	\theoremstyle{definition}
	\newtheorem{dfn}[thm]{Definition}
	\newtheorem{question}[thm]{Question}
	\newtheorem{nota}[thm]{Notation}
	\newtheorem{conv}[thm]{Convenien}
	\newtheorem*{claim*}{Claim}
	\newtheorem{ex}[thm]{Example}
	\newtheorem{counterex}[thm]{Counter-example}
	\newtheorem{rmk}[thm]{Remark}
	\newtheorem{rmks}[thm]{Remarks}
	
	\def\labelenumi{(\arabic{enumi})}

	\newcommand{\aro}{\longrightarrow}
	\newcommand{\arou}[1]{\stackrel{#1}{\longrightarrow}}
	\newcommand{\RA}{\Longrightarrow}

	\newcommand{\mm}[1]{\mathrm{#1}}
	\newcommand{\bm}[1]{\boldsymbol{#1}}
	\newcommand{\bb}[1]{\mathbf{#1}}

	\newcommand{\bA}{\boldsymbol A}
	\newcommand{\bB}{\boldsymbol B}
	\newcommand{\bC}{\boldsymbol C}
	\newcommand{\bD}{\boldsymbol D}
	\newcommand{\bE}{\boldsymbol E}
	\newcommand{\bF}{\boldsymbol F}
	\newcommand{\bG}{\boldsymbol G}
	\newcommand{\bH}{\boldsymbol H}
	\newcommand{\bI}{\boldsymbol I}
	\newcommand{\bJ}{\boldsymbol J}
	\newcommand{\bK}{\boldsymbol K}
	\newcommand{\bL}{\boldsymbol L}
	\newcommand{\bM}{\boldsymbol M}
	\newcommand{\bN}{\boldsymbol N}
	\newcommand{\bO}{\boldsymbol O}
	\newcommand{\bP}{\boldsymbol P}
	\newcommand{\bY}{\boldsymbol Y}
	\newcommand{\bS}{\boldsymbol S}
	\newcommand{\bX}{\boldsymbol X}
	\newcommand{\bZ}{\boldsymbol Z}

	\newcommand{\cc}[1]{\mathcal{#1}}
	\newcommand{\ccc}[1]{\mathscr{#1}}

	\newcommand{\ca}{\cc{A}}
	
	\newcommand{\cb}{\cc{B}}
	
	\newcommand{\cC}{\cc{C}}
	
	\newcommand{\cd}{\cc{D}}
	
	\newcommand{\ce}{\cc{E}}
	
	\newcommand{\cf}{\cc{F}}
	
	\newcommand{\cg}{\cc{G}}
	
	\newcommand{\ch}{\cc{H}}
	
	\newcommand{\ci}{\cc{I}}
	
	\newcommand{\cj}{\cc{J}}
	
	\newcommand{\ck}{\cc{K}}
	
	\newcommand{\cl}{\cc{L}}
	
	\newcommand{\cm}{\cc{M}}
	
	\newcommand{\cn}{\cc{N}}
	
	\newcommand{\co}{\cc{O}}
	
	\newcommand{\cp}{\cc{P}}
	
	\newcommand{\cq}{\cc{Q}}
	
	\newcommand{\cR}{\cc{R}}
	
	\newcommand{\cs}{\cc{S}}
	
	\newcommand{\ct}{\cc{T}}
	
	\newcommand{\cu}{\cc{U}}
	
	\newcommand{\cv}{\cc{V}}
	
	\newcommand{\cy}{\cc{Y}}
	
	\newcommand{\cw}{\cc{W}}
	
	\newcommand{\cz}{\cc{Z}}
	
	\newcommand{\cx}{\cc{X}}

	\newcommand{\g}[1]{\mathfrak{#1}}
	
	\newcommand{\af}{\mathds{A}}
	\newcommand{\PP}{\mathds{P}}
	
	\newcommand{\GL}{\mathrm{GL}}
	\newcommand{\PGL}{\mathrm{PGL}}
	\newcommand{\SL}{\mathrm{SL}}
	\newcommand{\NN}{\mathds{N}}
	\newcommand{\ZZ}{\mathds{Z}}
	\newcommand{\CC}{\mathds{C}}
	\newcommand{\QQ}{\mathds{Q}}
	\newcommand{\RR}{\mathds{R}}
	\newcommand{\FF}{\mathds{F}}
	\newcommand{\DD}{\mathds{D}}
	\newcommand{\VV}{\mathds{V}}
	\newcommand{\HH}{\mathds{H}}
	\newcommand{\MM}{\mathds{M}}
	\newcommand{\OO}{\mathds{O}}
	\newcommand{\LL}{\mathds L}
	\newcommand{\BB}{\mathds B}
	\newcommand{\kk}{\mathds k}
	\newcommand{\bs}{\mathbf S}
	\newcommand{\GG}{\mathds G}
	\newcommand{\XX}{\mathds X}
	
	\newcommand{\WW}{\mathds W}
	\newcommand{\al}{\alpha}
	
	\newcommand{\be}{\beta}
	
	\newcommand{\ga}{\gamma}
	\newcommand{\Ga}{\Gamma}
	
	\newcommand{\om}{\omega}
	\newcommand{\Om}{\Omega}
	
	\newcommand{\vt}{\vartheta}
	\newcommand{\te}{\theta}
	\newcommand{\Te}{\Theta}

	\newcommand{\ph}{\varphi}
	\newcommand{\Ph}{\Phi}

	\newcommand{\ps}{\psi}
	\newcommand{\Ps}{\Psi}
	
	\newcommand{\ep}{\varepsilon}
	
	\newcommand{\vr}{\varrho}
	
	\newcommand{\de}{\delta}
	\newcommand{\De}{\Delta}

	\newcommand{\la}{\lambda}
	\newcommand{\La}{\Lambda}
	
	\newcommand{\ka}{\kappa}
	
	\newcommand{\si}{\sigma}
	\newcommand{\Si}{\Sigma}
	
	\newcommand{\ze}{\zeta}
	
	\newcommand{\fr}[2]{\frac{#1}{#2}}
	\newcommand{\vs}{\vspace{0.3cm}}
	\newcommand{\na}{\nabla}
	\newcommand{\pd}{\partial}
	\newcommand{\po}{\cdot}
	\newcommand{\met}[2]{\left\langle #1, #2 \right\rangle}
	\newcommand{\rep}[2]{\mathrm{Rep}_{#1}(#2)}
	\newcommand{\repo}[2]{\mathrm{Rep}^\circ_{#1}(#2)}
	\newcommand{\hh}[3]{\mathrm{Hom}_{#1}(#2,#3)}
	\newcommand{\modules}[1]{#1\text{-}\mathbf{mod}}
	\newcommand{\vect}[1]{#1\text{-}\mathbf{vect}}
	
	\newcommand{\Modules}[1]{#1\text{-}\mathbf{Mod}}
	\newcommand{\dmod}[1]{\mathcal{D}(#1)\text{-}{\bf mod}}
	\newcommand{\spc}{\mathrm{Spec}\,}
	\newcommand{\an}{\mathrm{an}}
	\newcommand{\NNo}{\NN\smallsetminus\{0\}}
	
	\newcommand{\pos}[2]{#1\llbracket#2\rrbracket}
	\newcommand{\poss}[2]{#1[\![#2_s]\!]}
	
	\newcommand{\lau}[2]{#1(\!(#2)\!)}
	\newcommand{\laus}[2]{#1(\!(#2_{s})\!)}

	\newcommand{\cpos}[2]{#1\langle#2\rangle}
	
	\newcommand{\id}{\mathrm{id}}
	
	\newcommand{\one}{\mathds 1}
	
	\newcommand{\ti}{\times}
	\newcommand{\tiu}[1]{\underset{#1}{\times}}
	
	\newcommand{\ot}{\otimes}
	\newcommand{\otu}[1]{\underset{#1}{\otimes}}
	
	\newcommand{\wh}{\widehat}
	\newcommand{\wt}{\widetilde}
	\newcommand{\ov}[1]{\overline{#1}}
	\newcommand{\un}[1]{\underline{#1}}
	
	\newcommand{\op}{\oplus}
	
	\newcommand{\lid}{\varinjlim}
	\newcommand{\lip}{\varprojlim}

	\newcommand{\mcrs}{\bb{MC}_{\rm rs}}
	\newcommand{\mclog}{\bb{MC}_{\rm log}}
	\newcommand{\mc}{\mathbf{MC}}
	\newcommand{\smc}{\mathbf{SMC}}
	\newcommand{\enu}{\bb{End}^{\rm u}}
	\newcommand{\en}{\mathbf{End}}
	\newcommand{\fmod}{\mathbf{Mod}_{\rm f}}

	\newcommand{\ega}[3]{[EGA $\mathrm{#1}_{\mathrm{#2}}$, #3]}

	\newcommand{\asts}{\begin{center}$***$\end{center}}
	\title[Connections on a relative punctured disk]{Logarithmic decomposition of connections on a relatively punctured disk}

	\author[P. T.  Tam]{Ph\d am Thanh T\^am} 	\address{Department of Mathematics, Hanoi Pedagogical University 2, Vinh Phuc, Vietnam; Institute of Mathematics, Vietnam Academy of Science and Technology, Hanoi, 		Vietnam}      \email{phamthanhtam@hpu2.edu.vn}

	
	\keywords{Turrittin-Levelt-Jordan decomposition, logarithmic decomposition (MSC 2020:  12H05; 13H05; 14F10)}
	\maketitle
	
	\begin{abstract} Let $R=\pos Ct$ be the ring of power series over an algebraically closed field $C$ of characteristic zero. We show that each connection on a finite flat $\lau Rx$-module is the sum of a regular singular connection and a diagonalizable $\lau Rx$--linear endomorphism when it admits a Turrittin-Levelt-Jordan form over $\lau Rx$. This decomposition is compatible with the limit of the logarithmic decompositions of the connections obtained by the reduction of $R$ modulo $t^{k}$ of a given connection.
	\end{abstract}

	\section{Introduction}
	A module with a connection over $\lau Rx$ is a finite $\lau Rx$--module equipped with a $R$--linear map obeying the Leibniz's rule  with respect to $\vartheta:=x\frac{d}{dx}$. We will call these objects by connections over $\lau Rx$. Denoted by $\mc(\lau Rx/R)$ the category of connections over $\lau Rx$ with horizontal morphisms.

	The structure of arbitrary connections over $\lau Cx$ is well understood in the work of Turrittin \cite{turrittin55}, Levelt \cite{Lev75} and Katz \cite{Kat87}. In the case of regular singular connections, the structure is described in detail by Manin \cite{manin65} and Deligne \cite{deligne87} which the canonical logarithmic lattices play an important tool in their works. In \cite[Theorem 4.5]{HdST22}, authors call these latices by Deligne-Manin lattices. In \cite{Lev75}, Levelt shows that each connection $(E,\na)$ over $\lau Cx$ decomposes into a sum of a semisimple connection and a nilpotent $\lau Cx$--linear  endomorphism. 
	We establish a decomposition $$\nabla=\nabla^{\rm rs}+P_\na$$ which is called the logarithmic decomposition for $\na$, where $\nabla^{\rm rs}$ is a regular singular connection on $E$, while $P_\na$ is a semisimple $\lau Cx$--linear endomorphism with eigenvalues in $x_s^{-1}C[x_s^{-1}]$ for some $s\in \NN$. The regular singular part allows us to define the Deligne-Manin lattice for $(E,\nabla^{\rm rs})$ which coincides with the \textit{canonical lattice} of $(E,\nabla)$ in the sense of \cite[Theorem 3.3.1]{malgrange96}.

	Recently, regular singular connections over $\lau Rx$ are studied by \cite{HdST22} and \cite{HdSTT22} for the case $R$ is a Henselian Noetherian local $C$-algebra. This work aims to establish the logarithmic decomposition for connections over $\lau Rx$  in the case $R$ is a complete discrete valuation domain with the maximal ideal $\g r$. It is the continuation works in \cite{HdST22}, we study the existence of the logarithmic decomposition of connections over $\lau Rx$. By reducing modulo $\g r^{\ell+1}$, each connection $(E,\nabla)$ in $\mc(\lau Rx/R)$ is associated to $(E,\nabla)|_\ell$ in 
	$\mc(\lau {R_\ell}x/R_\ell)$, where $R_\ell:=R/\g r^{\ell+1}$ is the reduction of $R$ modulo $\g r^{\ell+1}$. The connection  $(E,\nabla)|_\ell$ can be considered as an object in  $\mc(\lau Cx/C)$ with an action of $R_\ell$, and moreover, the logarithmic decomposition of $(E,\nabla)|_\ell$ is compatible with the action of $R_\ell$. Using the completeness of $R$, we then go back to $R$ by passing to the limit as $\ell\to \infty$. We establish the logarithmic decomposition for connections over $\lau Rx$ equipped with a Turrittin-Levelt-Jordan form which is compatible with taking limits along $\g r$--adic topology. Additionally, these connections have a  Deligne-Malgrange lattice which is compatible with the Turrittin-Levelt-Jordan decomposition. 
	
	The paper is organized as follows. In section \ref{section2--0}, we obtain the logarithmic decomposition  of connections over $\lau Cx$ in a canonical way by using Galois descent. In section \ref{section3--0}, we extend these results to the case of connections are equipped with an action of a local Artinian algebra. In Section \ref{section4--0}, Theorem \ref{prp-31.03--2} gives a condition for the existence of the logarithmic decomposition for connections over $\lau Rx$ which is compatible with taking limit along the $\g r$--adic topology (see Theorem \ref{thm.31.03--1}).


	\subsection*{Notation and conventions}
	\begin{enumerate}[(1)]\item In this text,  let us fix a subset  $\tau\subset C$ of representatives of the co-sets $C/\ZZ$ such that $\tau \cap \ZZ=\{0\}$.
		\item Let  $\lau Rx:=\pos Rx[x^{-1}]$ and the derivation $\vt\left(\sum\limits_{n\geq k} a_nx^n\right)=\sum\limits_{n\geq k} na_nx^n.$
		\item  For any $s\in\NN^*$, let $x_s$ be {\it a primitive $s^{th}$ root} of $x$. We extend $\vt$ to a derivation on $\lau R{x_s}$ and denote it by the same symbol $\vt.$
		%

		\item For each $f\in R$ and $r\in \NN^*$, let 
		$\mm J^\flat_{r}(f)=\begin{pmatrix}
			f&0&\cdots&0
			\\
			*&\ddots&\ddots&\vdots
			\\
			\vdots&\ddots&\ddots&0
			\\
			0&\cdots&*&f\end{pmatrix}$ with $*\in \{0,1\}.$ 
		\item For each $f=\sum\limits_{k< 0}f_kx^k+\sum\limits_{i\ge0}f_ix^i  \in \lau Rx$ with $f_i\in R$, let $\mm{p}(f):=\sum\limits_{k< 0}f_kx^k$.
	\end{enumerate}

	\section{Review of Levelt-Turrittin theory}\label{section2--0} 
	
	\subsection{Basic material} Let $(R,\g r)$ be a complete discrete valuation domain of $C$-algebras and $K$ be its the fraction field. Let us fix an isomorphism $R\simeq \pos Ct$.

	\begin{dfn} \label{section2--1}
		The {\it category of connections},  $\mc(\lau Rx/R)$, has for    
		\begin{enumerate}\item[{\it objects}]   are pairs $(E,\na)$ consisting of a finite $\lau Rx$-module and an $R$-linear endomorphism $\na:E\to E$ satisfying Leibniz's rule $\na(fe)=\vt(f)e+f\na(e)$ for all $f\in \lau Rx$ and $e\in E$, and  
			\item[{\it arrows}] from $(E,\na)$ to  $(E',\na')$ are  $\lau Cx$-linear maps $\ph:E\to E'$ such that $\na'\ph=\ph\na$.  
		\end{enumerate}
		The category of {\it logarithmic connections},    $\mclog(\pos Rx/R)$, has for     
		\begin{enumerate}\item[{\it objects}] are pairs $(\ce,\na)$ consisting of a finite   $\pos Rx$-modules and a $C$-linear endomorphism $\na:\ce\to\ce$ satisfying Leibniz's rule $\na(fe)=\vt(f)e+f\na(e)$ for all $f\in \ce$ and $e\in \ce$, and   
			\item[{\it arrows}] from $(\ce,\na)$ to $(\ce',\na')$ are $\pos Rx$-linear maps $\ph:\ce\to \ce'$ such that $\na'\ph=\ph\na$.  
		\end{enumerate}
	\end{dfn}

	We possess an evident $R$-linear functor 
	\begin{equation}\label{gama-func}
		\ga:\mclog (\pos Rx/R)\aro\mc (\lau Rx/R).
	\end{equation}

	\begin{dfn}\label{section2--2} An object $(E,\na)\in\mc(\lau Rx/R)$ is said to be {\it regular-singular} if it is isomorphic to a certain $\ga(\ce,\na)$ with $(\ce,\na)\in\mclog(\pos Rx/R)$.  Such object $(\ce,\na)$ is called a {\it logarithmic model of $(E,\na)$}. In case the model $\ce$ is, in addition, a {\it free} $\pos Rx$-module, we shall speak of a logarithmic {\it lattice}.
	\end{dfn}

	The full subcategory of $\mc(\lau Rx/R)$ whose objects are regular-singular shall be denoted by $\mcrs(\lau Rx/R)$.

	\begin{dfn}\label{dfn.20220716--01}
		Let $\bb{End}_R$ be the category whose   
		\begin{enumerate}\item[{\it objects}] are couples $(V,A)$ consisting of a finite $R$-module $V$ with $A\in\mm{End}_R(V)$,  and 
			\item[{\it arrows}] from $(V,A)$ to $(V',A')$ are $R$-linear morphisms $\ph:V\to V'$ with $A'\ph=\ph A$.  
		\end{enumerate}
	\end{dfn}

	\begin{ex}
		For each $(V,A)\in\bb{End}_R$, let $D_A:\pos R{x}\otu RV\to \pos R{x}\otu RV$ be the map defined by $D_A(f\ot v) = \vt f\ot v+f\ot Av.$  Then $\mm{eul}(V,A):=\big(\pos R{x}\otu RV,D_A\big)$ is an object in $\mclog(\pos Rx/R)$ which is called the Euler connection over $\pos Rx$ associated to $(V,A)$.
	\end{ex}

	\begin{dfn}\label{28.06.2021--6}
		Let $(\ce,\nabla)$ be an object in $\mclog(\pos Rx/R)$.
		\begin{enumerate}[(i)]
			\item 
			The natural $R$-linear map $\mm{res}_\na: \ce/(x)\aro\ce/(x)$ is called the {\it residue} of $\na$.  The map
			$\ov{\mm{res}}_\na : \ce/(\g r,x)\aro\ce/(\g r,x)$ is called the {\it reduced residue} of $\na$.  
			\item The  eigenvalues of  $\ov{\mm{res}}_\na$ are called the (reduced) {\it  exponents} of $\na$. The set of exponents will be denoted by $\mm{Exp}(\na)$.
		\end{enumerate}
	\end{dfn}
	
	\begin{dfn}\label{section2--3}
		A logarithmic model $\ce\in\mclog(\pos Rx/R)$ of $(E,\na)$ is call \textit{Deligne-Manin} with respect to $\tau$ if all the exponents of $\ce$ lie in $\tau$. 
	\end{dfn}

	Note that $\lau Rx$ is a principle ideal domain (see \cite[Lemma 2.12]{HdSTT22}). Then we obtain the following theorem.
	\begin{thm}\label{10.12.2021--2}Let $(E,\na)\in\mc(\lau Rx/R)$ be given. Then if  $E$  has no $R$-torsion then it is free over $\lau Rx$.
	\end{thm}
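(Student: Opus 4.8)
The plan is to use that $\lau Rx$ is a principal ideal domain to reduce the statement to a question about the torsion submodule, and then to exploit the connection $\na$ to control which primes can occur in the torsion. Since $\lau Rx$ is a PID, a finite $\lau Rx$-module is free if and only if it is torsion-free, so it suffices to show that the $\lau Rx$-torsion submodule $T\subseteq E$ vanishes. First I would check that $T$ is $\na$-stable: if $fe=0$ with $f\neq 0$, then $0=\na(fe)=\vt(f)e+f\na(e)$ yields $f^{2}\na(e)=-\vt(f)\,fe=0$, whence $\na(e)\in T$. Thus $(T,\na|_T)$ is itself a connection, and it is enough to show that a nonzero torsion connection cannot be $R$-torsion-free.

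Next, let $(g)=\mathrm{Ann}_{\lau Rx}(T)$, where $g\neq 0$ is a non-unit because $T\neq 0$. Applying $\na$ to $ge=0$ for $e\in T$ and using $g\na(e)=0$ (as $\na(e)\in T$), one gets $\vt(g)e=0$ for all $e$, hence $\vt(g)\in(g)$, i.e.\ $g\mid\vt(g)$. Writing $g=\prod_i p_i^{a_i}$ as a product of powers of pairwise non-associate irreducibles and computing, for each $k$, the valuation $v_{p_k}$ of $\vt(g)/g=\sum_i a_i\,\vt(p_i)/p_i$, the term $i=k$ has valuation $v_{p_k}(\vt(p_k))-1$ while every other term has non-negative valuation at $p_k$; integrality of the sum then forces $v_{p_k}(\vt(p_k))\ge 1$ for every $k$. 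In other words, each prime $(p)$ occurring in $T$ is $\vt$-stable, $\vt(p)\in(p)$.

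The crux is therefore a classification lemma: the only nonzero prime of $\lau Rx$ with $\vt(p)\in(p)$ is $(t)$. To prove it I would identify $\lau Rx=\pos C{x,t}[x^{-1}]$, the localization at $x$ of the regular local ring $\co:=\pos C{x,t}$, so that the nonzero primes of $\lau Rx$ are the height-one primes $(f)$ of $\co$ with $x\nmid f$. For such an irreducible non-unit $f$ one has $f(0,t)\neq 0$ with $\mathrm{ord}_t f(0,t)=v\ge 1$, so Weierstrass preparation in the variable $t$ gives $f=u\cdot P$ with $u$ a unit and $P=t^{v}+\sum_{i<v}b_i(x)\,t^{i}$ distinguished, $b_i(0)=0$. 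The decisive point is that $\vt=x\partial_x$ lowers the $t$-degree: $\vt(P)=\sum_{i<v}x\,b_i'(x)\,t^{i}$ has $t$-degree $<v$, so by uniqueness in Weierstrass division $P\mid\vt(P)$ forces $\vt(P)=0$, hence $b_i'=0$, $b_i=0$, and $P=t^{v}$; irreducibility gives $v=1$ and $(f)=(t)$. Granting the lemma, every prime occurring in $T$ is $(t)$, so $T$ is annihilated by a power of $t$ and is $R$-torsion; since $E$ is $R$-torsion-free, $T=0$ and $E$ is free.

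The step I expect to be the main obstacle is the classification of $\vt$-stable primes. The subtlety is that $R$ is not a field, so the usual "leading coefficient in $x$" arguments break down; the resolution is to pass to the two-variable ring $\pos C{x,t}$ and apply Weierstrass preparation in $t$ (rather than $x$), after which the degree-dropping property of $\vt=x\partial_x$ pins down the unique invariant divisor $(t)$.
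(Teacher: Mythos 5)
Your argument is correct, but it proceeds along a genuinely different route from the paper's. The paper's proof is two lines: absence of $R$-torsion makes $E$ flat over the discrete valuation ring $R$; the cited result \cite[Theorem 8.18]{HdST22} upgrades $R$-flatness of a connection to $\lau Rx$-flatness; and a finite flat module over the principal ideal domain $\lau Rx$ is free. You instead show directly that the $\lau Rx$-torsion submodule $T$ vanishes, by observing that $T$ is $\na$-stable, that its annihilator $(g)$ must satisfy $g\mid\vt(g)$, that the logarithmic-derivative computation forces $p\mid\vt(p)$ for every prime divisor $p$ of $g$, and finally that Weierstrass preparation in $t$ identifies $(t)$ as the unique $\vt$-invariant prime of $\lau Rx=\pos C{x,t}[x^{-1}]$, whence $T$ is annihilated by a power of $t$ and is therefore zero. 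What your approach buys is self-containedness: it makes explicit the mechanism hidden inside the cited flatness theorem, namely that $\lau Rx$-torsion in a connection can only be supported at the single $\vt$-stable prime $(t)$, i.e.\ can only come from $R$-torsion. What the paper's approach buys is brevity and the reusability of a general flatness statement. One step you should write out more carefully: the divisibility $P\mid\vt(P)$ is first obtained in the localization $\lau Rx$, so before invoking uniqueness of Weierstrass division in $\pos C{x,t}$ you should clear a power of $x$ (write $x^n\vt(P)=h_0P$ with $h_0\in\pos C{x,t}$ and note that $x^n\vt(P)$ still has $t$-degree $<v$, so uniqueness gives $h_0=0$); this is routine but not literally what you wrote.
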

	\begin{proof}
		It is obvious to see that $E$ is $R$-flat, hence it is $\lau Rx$-flat since \cite[Theorem 8.18]{HdST22}. Because $\lau Rx$ is a principle ideal domain, so $E$ is $\lau Rx$-free.
	\end{proof}

	For a given finite $\lau Rx$-module $W$, the submodule 
	\[
	\begin{split}W_{\mathrm{tors}}&=\bigcup_{k=1}^\infty(0:t^k)_W
		=\{w\in W\,:\,\text{some power of $t$ annihilates $w$}\}
	\end{split}
	\] is stable under $\na$. Noetherianity assures that $W_{\mm{tors}}=(0:t^\ell)_W$ for some $\ell\in \NN$. Let 
	\[
	\begin{split}
		r_W&=\min\{k\in\NN\,:\,t^kW_{\mm{tors}}=0\}=\min\{k\in\NN\,:\,W_{\mm{tors}}=(0:t^k)_W\}.
	\end{split}
	\]

	Denote by $\mc^{\mm{o}}\big(\lau Rx/R\big)$ the full subcategory of $\mc(\lau Rx/R)$ whose objects are connections over $\lau Rx$ having no $R$--torsion. By Theorem \ref{10.12.2021--2}, a connection $(N,\na)$ over $\lau Rx$ belongs to $\mc^\circ(\lau Rx/R)$ iff $(0:t)_N=0$. In particular, if $\mm r_N=0$ then $N\in \mc^\circ(\lau Rx/R)$. 
	
	The following is an analog of \cite[Proposition 2.13]{HdSTT22} for the connections over $\lau Rx$. 
	\begin{prp}\label{prp.202303.01}
		Each object in $\mc(\lau Rx/R)$ is a quotient of a certain object in the category $\mc^{\mm{o}}\big(\lau Rx/R\big)$.   
	\end{prp}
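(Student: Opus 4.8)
The plan is to realize $(E,\na)$ as a quotient of a free module carrying an induced connection, in the spirit of the free-presentation argument behind \cite[Proposition 2.13]{HdSTT22}. Since $E$ is finite over $\lau Rx$, I would first fix a finite generating family $e_1,\dots,e_n$ and form the free module $F=(\lau Rx)^n$ with standard basis $\ep_1,\dots,\ep_n$, together with the $\lau Rx$-linear surjection $\pi\colon F\to E$ determined by $\ep_i\mapsto e_i$.

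Next I would transport the connection along $\pi$. Because the $e_j$ generate $E$ and $\na(e_i)\in E$, one may choose (non-uniquely) elements $A_{ji}\in\lau Rx$ with $\na(e_i)=\sum_j A_{ji}e_j$, and then define $\tilde\na$ on $F$ by the connection matrix $A=(A_{ji})$, i.e.\ $\tilde\na(\ep_i)=\sum_j A_{ji}\ep_j$ extended through the Leibniz rule $\tilde\na(f\ep_i)=\vt(f)\ep_i+f\tilde\na(\ep_i)$. Since $\vt$ is a derivation, this prescription produces a genuine object $(F,\tilde\na)\in\mc(\lau Rx/R)$ for any choice of $A$. A direct check then shows that $\pi$ is horizontal: for $f_1,\dots,f_n\in\lau Rx$ both $\pi\big(\tilde\na(\sum_i f_i\ep_i)\big)$ and $\na\big(\pi(\sum_i f_i\ep_i)\big)$ reduce to $\sum_i\vt(f_i)e_i+\sum_i f_i\na(e_i)$, the terms agreeing precisely because $\pi(\tilde\na\ep_i)=\sum_j A_{ji}e_j=\na(e_i)$ by construction. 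As $\pi$ is $\lau Rx$-linear it is in particular $\lau Cx$-linear, hence a surjective morphism in $\mc(\lau Rx/R)$.

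Finally I would verify that $(F,\tilde\na)$ lies in $\mc^{\mm o}(\lau Rx/R)$. The ring $\lau Rx$ is an integral domain containing $R$, so multiplication by $t$ is injective on the free module $F$; thus $(0:t)_F=0$, equivalently $\mm r_F=0$. By Theorem \ref{10.12.2021--2} and the remark following it, this is exactly the condition for $F$ to have no $R$-torsion, so $(F,\tilde\na)\in\mc^{\mm o}(\lau Rx/R)$ and $\pi$ exhibits $(E,\na)$ as a quotient of it.

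I expect no serious obstacle here: the two points that require care are checking that an \emph{arbitrary} lift $A$ still yields a well-defined connection for which $\pi$ is horizontal (which hinges only on the Leibniz rule together with the defining relation $\na(e_i)=\sum_j A_{ji}e_j$), and confirming the $R$-torsion-freeness of $F$, which follows at once from the domain property of $\lau Rx$.
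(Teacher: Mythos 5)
Your proposal is correct, but it follows a genuinely different route from the paper's. You present $E$ by a free module $F=(\lau Rx)^n$, lift $\na$ to $F$ by choosing any connection matrix $A=(A_{ji})$ with $\na(e_i)=\sum_j A_{ji}e_j$, and note that this always yields a bona fide object of $\mc(\lau Rx/R)$ (a connection on a free module may be prescribed freely on a basis and extended by the Leibniz rule), that $\pi$ is then horizontal by the computation you give, and that $F$, being free over the domain $\lau Rx\supseteq R$, has $(0:t)_F=0$ and hence lies in $\mc^{\mm{o}}\big(\lau Rx/R\big)$. All of these steps are sound. The paper instead runs an induction on the torsion exponent $r_M$: for $r_M=1$ it pulls $M$ back along the surjection $\lau Rx\otimes_{\lau Cx}(M/\g rM)\twoheadrightarrow M/\g rM$, invoking \cite[Lemma 1.2.5]{andre09} to see that the fiber product is again a connection and inherits torsion-freeness, and the general case is reduced to this one via the quotient by $(0:t)_M$. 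Your argument buys brevity and a slightly stronger conclusion (the covering object is actually free over $\lau Rx$, not merely $R$-torsion-free), while the paper's d\'evissage produces a cover assembled from $M$ itself, with kernel controlled by $\g r$ and the torsion filtration, in parallel with the cited \cite[Proposition 2.13]{HdSTT22}. Either argument establishes the proposition.
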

	\begin{proof}

		Let $(M,\na)\in\mc(\lau Rx/R)$. Consider the case $r_M=1$. We have $(\g rM)_{\mm{tors}}=0$ because if $tm\in (\g rM)_{\mm{tors}}$ then $m\in M_{\mm{tors}}$. Let $q:M\twoheadrightarrow Q$ be the quotient by $\g rM$. Then $Q$ is an object in $\mc(\lau Cx/C)$, hence it is $\lau Cx$--free. It is obvious that $\wt Q:=(\lau Rx\otu{\lau Cx}Q, \na)$ is an object in $\mc^\circ(\lau Rx/R)$. Let $\wt Q\twoheadrightarrow Q$ be the arrow in $\mc(\lau Rx/R)$ defined by taking modulo $\g r$. According to \cite[Lemma 1.2.5]{andre09}, the following diagram commutes with exact rows: 
		\[
		\xymatrix{0\ar[r]& \g rM\ar[r]&M\ar[r]^q&Q\ar[r]&0\\ 0\ar[r]& \g rM\ar[u]^\sim\ar[r]&\wt M\ar@{}[ru]|\square\ar[u] \ar[r]&\ar@{->>}[u] \wt Q\ar[r]&0.}
		\]
		The right square is Cartesian, so $\wt M\to M$ is surjective. Note that $\wt M_{\mm{tors}}=0$ because $(\g rM)_{\mm{tors}}=\wt Q_{\mm{tors}}=0$. Hence, $\wt M\in\mc^\circ(\lau Rx/R)$. 
		
		Consider $r_M>1$ and let $N=(0:t)_M$. Let $q:M\twoheadrightarrow Q$ be the quotient by $N$. By definition, $t^{r_M-1}Q_{\mm{tors}}=0$, therefore $r_Q\le r_M-1$. By induction, there exists $\wt Q\in\mc^\circ(\lau Rx/R)$ and a surjection $\wt Q\twoheadrightarrow Q$. According to \cite[Lemma 1.2.5]{andre09} again, we obtain the commutative following diagram with exact rows
		\[
		\xymatrix{0\ar[r]& N\ar[r]&M\ar[r]^q&Q\ar[r]&0\\ 0\ar[r]& N\ar[u]^\sim\ar[r]&\wt M\ar@{}[ru]|\square\ar[u] \ar[r]&\ar@{->>}[u]\wt Q\ar[r]&0.}
		\]
		Similarly, $\wt M\to M$ is surjective.  Because $r_N=1$ and $\wt M_\mm{tors}= N$, hence $r_{\wt M}=1$. By using previous step, there exists a surjection $\wt M_1\twoheadrightarrow\wt M$ with $\wt M_1\in\mc^\circ(\lau Rx/R)$. By taking the composition, we obtain that $\wt M_1\twoheadrightarrow M$ is surjective.
	\end{proof}

	\subsection{Jordan-Levelt decompositions}\label{section2--01} Let ${\rm dLog}:= \big\{u^{-1}\vt u \,:\,u\in\lau Cx\setminus\{0\}\big\}$. Then
	\[\begin{split}
		{\rm dLog}&=\left\{\sum_{n\ge0}a_nx^n\in\pos Cx\,:\,a_0\in\ZZ\right\}.
	\end{split}\]
	
	For each $f\in\lau Cx$, let us define $\g L_f=(\lau Cx \bb f,\mm d_f)$ 	by $\mm d_f(\bb f)=f\bb f$.
	Note that $\g L_f\simeq \g L_g$ if and only if $f-g\in{\rm dLog}$. Let $\XX$ stand for the set of isomorphism classes of rank one connections and endow it with the group structure obtained by the tensor product, it follows that 
	\[
	\fr{\lau Cx}{{\rm dLog}}\aro \XX,\quad f+{\rm dLog}\longmapsto \g L_f
	\] 
	is  a group isomorphism. Furthermore, $C[x^{-1}]/\ZZ\arou\sim \XX$ is isomorphic which is defined by $f+\ZZ\longmapsto \g L_f$. 	For each   $f\in C[x^{-1}]$, denote by $[f]$ for its class in  $C[x^{-1}]/\ZZ$. Then, there exists $c_f\in\tau$ such that 	$c_f+\mm p(f)$  is a representative in $C[x^{-1}]$ of $[f]$.  We say that $\g L_f$ is \textit{a connection of type $[f]$}.

	For each $s\in\NN^*$, consider a new category $\mc_s(\lau  C{x_s}/C)$ whose objects are couples $(E,\na)$ consisting of a finite dimensional $\lau C{x_s}$--vector space $E$ equipped with a connection $\na$ over $\laus Cx$ with respect to $\vt$. 
	Arrows are defined in the evident way.  There is a natural functor 
	\[(-)_{(s)}:
	\mc(\lau  C{x}/C)\aro\mc_s(\lau  C{x_s}/C),
	\]
	which, on the level of objects, is $(E,\na)_{(s)}=(E_{(s)},\na)$ 	with $E_{(s)}=\lau C{x_s}\otu{\lau Cx} E$ and the connection obeys the rule $	\na(g\ot m)=\vt(g)\ot m+g\ot\na(m)$, where the derivation $\vt$ on $\laus Cx$ is defined by $\vt= s^{-1}x_s\frac{d}{dx_s}$

	In parallel to Definitions \ref{section2--1} and \ref{section2--2}, we have:
	\begin{dfn}\label{16.12.2021--1}
		We define the category $\mc_{\log,s}(\pos  C{x_s}/C)$ having as objects couples $(\cm,\na)$ where $\cm$ is a finite $\pos C{x_s}$--module and $\na$ is a logarithmic connection over $\pos Rx$ with respect to $\vt$ and arrows between objects are defined in the evident way. 
	\end{dfn}
	
	Consider the restriction functor 
	\begin{equation}\label{gama-func2}
		\ga_s:\mc_{\log,s}(\pos C{x_s}/C)\to\mc_{s}(\lau  C{x_s}/C).
	\end{equation}

	The set
	\[\mathds X_s:=\left\{\begin{array}{c}\text{ group of isomorphism classes of objects}\\ \text{of rank one in $\mc_s(\lau C{x_s}/C)$}\end{array}\right\}\]
	is isomorphic to $C[x_s^{-1}]/s^{-1}\ZZ \arou\sim \lau C{x_s} / {\rm dLog}_s \arou\sim
	\XX_s$, where 
	\[\begin{split}
		{\rm dLog}_s&= \left\{\sum_{n\ge0}a_nx_s^n\in\pos C{x_s}\,:\,a_0\in s^{-1}\ZZ\right\}.\end{split} \]

	\begin{dfn}
		A connection $(E,\na)\in\mc(\lau Cx/C)$ is called {\it diagonalizable}   over $\lau C{x}$ if it is a direct sum of objects in $\XX$. The connection $(E,\na)$ is called \textit{semi-simple} if there exists $s\in\NN$ such that $(E_{(s)},\na)$ is diagonalizable. 
	\end{dfn}

	The following theorem is reminiscent of the Jordan-Chevalley decomposition for a linear operator. The detail proof see {\cite[Section 1,  Theorem I]{Lev75}}.
	\begin{thm}[{Jordan-Levelt} decomposition] \label{section2--4} 
		Let $(E,\na)\in\mc\big(\lau Cx/C\big)$. Then, there exists a unique decomposition $\na=S_\na+N_\na$ 
		satisfying the following conditions:
		\begin{itemize}
			\item[(i)] $(E,S_\na)$ is a semi-simple connection in $\mc\big(\lau Cx/C\big)$. 
			\item[(ii)] $N_\na:E\aro E$ is a $\lau  Cx$--linear nilpotent endomorphism.
			\item[(iii)] $S_\na$ and $N_\na$ commute. \qed
		\end{itemize}
	\end{thm}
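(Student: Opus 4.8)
The plan is to prove existence by passing to a ramified cover, reducing to the ordinary Jordan--Chevalley splitting of a residue endomorphism over $C$, and then descending along the Galois group $\mu_s=\mathrm{Gal}(\lau C{x_s}/\lau Cx)$; uniqueness will then follow from the intrinsic nature of the resulting decomposition by types.

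Since $\lau Cx$ is a field, $E$ is a finite dimensional vector space over $\lau Cx$. By the Turrittin--Levelt structure theorem \cite{Lev75} there is an $s\in\NN^*$ such that $(E_{(s)},\na)$ splits as $E_{(s)}=\bigoplus_{[f]}E_{[f]}$ indexed by types $[f]\in\XX_s$, where on $E_{[f]}$ the operator $\na-f\,\id$ is regular singular for a representative $f$ of $[f]$. On its canonical (Deligne--Manin) lattice this regular singular part is an Euler connection $\mm{eul}(V,A)$ with residue $A$, so that $\na|_{E_{[f]}}=f\,\id+D_A$. As $C$ is algebraically closed, the ordinary Jordan--Chevalley decomposition yields $A=A_{\mm s}+A_{\mm n}$ with $A_{\mm s}$ diagonalizable, $A_{\mm n}$ nilpotent, and $A_{\mm s}A_{\mm n}=A_{\mm n}A_{\mm s}$. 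Setting $S_\na|_{E_{[f]}}=f\,\id+D_{A_{\mm s}}$ and letting $N_\na|_{E_{[f]}}=A_{\mm n}$ act $\lau C{x_s}$-linearly, the summand $(E_{[f]},S_\na)$ is isomorphic to $\bigoplus_j\g L_{f+c_j}$ (with $c_j$ the eigenvalues of $A_{\mm s}$), hence diagonalizable; $N_\na$ is $\lau C{x_s}$-linear and nilpotent; and $S_\na$ and $N_\na$ commute because $A_{\mm s}A_{\mm n}=A_{\mm n}A_{\mm s}$ while $f\,\id$ is scalar. Summing over $[f]$ produces a decomposition $\na=S_\na+N_\na$ over $\lau C{x_s}$.

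To return to $\lau Cx$ I would use Galois descent. The type decomposition and the Jordan--Chevalley splitting are canonical, so $S_\na$ and $N_\na$ are equivariant for the natural $\mu_s$-action on $E_{(s)}$ and therefore descend to an operator $S_\na$ and a $\lau Cx$-linear endomorphism $N_\na$ on $E$ with $\na=S_\na+N_\na$. By construction $(E_{(s)},S_\na)$ is diagonalizable, so $(E,S_\na)$ is semi-simple by definition; $N_\na$ remains nilpotent because the base change $\lau Cx\to\lau C{x_s}$ is faithfully flat and $N_\na\ot 1$ is nilpotent; and the commutation $[S_\na,N_\na]=0$ descends as well. This establishes (i)--(iii).

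For uniqueness, let $\na=S+N=S'+N'$ be two decompositions satisfying (i)--(iii). From $[S,N]=0$ we get $[S,\na]=0$, and likewise $N,S',N'$ all commute with $\na$. Choosing $s$ so large that both $S$ and $S'$ become diagonalizable, write $E_{(s)}=\bigoplus_g E_g$ for the decomposition in which $S=g\,\id$ on $E_g$. Since $N=\na-S$ is nilpotent and commutes with $S$, the operator $\na-g\,\id=N$ is nilpotent on $E_g$, so $E_g$ is precisely the largest subconnection on which $\na-g\,\id$ is nilpotent. This property depends only on $\na$, since there are no nonzero horizontal maps between pieces of distinct types $g\neq g'$ in $\XX_s$; hence the decomposition $\bigoplus_g E_g$ together with the rule $S=g\,\id$ is forced by $\na$ alone. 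Thus $S$, and then $N=\na-S$, are uniquely determined over $\lau C{x_s}$, and by descent over $\lau Cx$, giving $S=S'$ and $N=N'$. I expect the descent step to be the main obstacle: one must verify the canonicity of both the type decomposition and the residue splitting sharply enough to guarantee $\mu_s$-equivariance, which is exactly where Galois descent enters.
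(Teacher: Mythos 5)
The paper does not prove this theorem at all: it is stated with a \verb|\qed| and the reader is referred to \cite[Section 1, Theorem I]{Lev75}, so there is no internal argument to compare against. Your existence argument is the standard one and is essentially the strategy the paper itself deploys later (in the proof of Theorem \ref{section2--12}): split $E_{(s)}$ by types via Turrittin--Levelt, apply the ordinary Jordan--Chevalley decomposition to the residue of the regular-singular part of each isotypical piece, and descend along $\mm{Gal}(\lau C{x_s}/\lau Cx)$ using the canonicity of the construction. That part is sound, and your identification of the descent/equivariance verification as the delicate point is accurate.

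The uniqueness argument, however, has a genuine gap. You correctly observe that the isotypical pieces $E_g$ are intrinsic to $\na$ (they are generated by $\ker(\na-g)^q$, as in Lemma \ref{section2--7}), but you then conclude with ``the rule $S=g\,\id$ is forced by $\na$ alone.'' This is not well posed: $S$ is a connection, not a linear map, so ``$S=g\,\id$ on $E_g$'' really means ``there exists a basis $\bb e$ of $E_g$ with $\mm{Mat}(S,\bb e)=g\mm I$,'' and two such connections attached to different bases $\bb e'=\bb e Q$ differ by the linear map with matrix $Q^{-1}\vt(Q)$, which is nonzero unless $Q$ is constant. So pinning down $E_g$ does not yet pin down $S|_{E_g}$. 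To close the gap you must actually use hypothesis (iii): from $[S,N]=0$ one checks that $\mm{Mat}(N,\bb e)$ is a \emph{constant} nilpotent matrix in any $S$-trivializing basis (after twisting by $\g L_{-g}$, the identity $S(N(e))=N(S(e))$ forces $\vt(\mm{Mat}(N,\bb e))=0$), so that both candidate decompositions realize $\na|_{E_g}$ as an Euler connection $\mm{eul}(V,g+A)$ and the question reduces to the uniqueness of the Jordan--Chevalley decomposition of the residue $A$ together with the fact that horizontal endomorphisms of an Euler connection come from $A$-commuting endomorphisms of $V$. This is exactly the route Levelt takes; without it your uniqueness proof does not go through.
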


	\begin{dfn}\label{section2--5} The {\it Turrittin index} of $(E,\na)$ is the smallest number $s\in \NN$ such that the connection $(E_{(s)},S_\na)$ is diagonalizable. We say that $(E,\na)$ is \textit{unramified} if its Turrittin index equals $1$.
	\end{dfn}

	\subsection{Turrittin-Levelt-Jordan decomposition}
	\textit{From on now}, denote by $s\in\NN$ for the Turrittin index of $(E,\na)$. Because $S_\na$ is diagonalizable,  so $(E_{(s)},S_\na)$ is a direct sum of connections over $\laus Cx$ of rank one. Hence, there exists a  finite set $\Ph\subset C[x_s^{-1}]/s^{-1}\ZZ$ and a decomposition 
	$(E_{(s)} ,S_{\na})=\bigoplus\limits_{\varphi\in\Phi} (E_{\ph},S_\ph)$
	in $\mc_s(\laus Cx/C)$, where $(E_{\ph},S_\ph)$ is the direct sum of the subconnections of type $\ph$ of $(E_{(s)} ,S_{\na})$. Notice that $S_\nabla$ and $N_\nabla$ commute, so $E_{\ph}$ is closed under the action of $S_\nabla, N_\nabla$ and $\na$. 
	Hence, we obtain the following decomposition
	\begin{equation}\label{TLJdecom--1}
		(E_{(s)} ,{\na})=\bigoplus\limits_{\varphi\in\Phi} (E_{\ph},\na_\ph)
	\end{equation}	
	in $\mc_s(\laus Cx/C)$ which is called the \textit{the Turrittin-Levelt-Jordan decomposition} (TLJ-decomposition) of $(E_{(s)},\na)$, see \cite[Theorem 16.1.2]{andre-baldassarri-cailotto20}. In addition, $(E_{\ph},\na_\ph)$ is called  the  \textit{isotypical component of type $\ph$} of $(E_{(s)},\na)$. For each $\ph\in\Ph$, denote by $\mm p(\ph)=\mm p(f_\ph)$, where $f_\ph$ is a representative in $\laus Cx$ of $\ph$.

	\begin{rmk}\label{21.03.2022--1} The set $\mm{Irr}(E,\na)=\big\{\mm p(\ph)\in \lau C{x_s} / {\rm dLog}_s:\ph \in \Ph\big\}$ is defined \textit{uniquely} and depends only on $(E,\na)$, see \cite[Th\'eor\`eme 2.3.1]{andre07}. We say that $\mm{Irr}(E,\na)$ is \textit{the set of irregular values of $\na$}.
	\end{rmk}
	

	\begin{lem}[{\cite[Section 4, Lemma]{Lev75}}]\label{section2--7} Let $\ph\in\Ph$ and $f_\ph$ be its representative in $\lau  C{x_s}$. There exists $q\in\NN$ such that $\ker_{\laus Cx}\big(\na -f_\ph \big)^{q}$ spans the $\lau  C{x_s}$--space $E_{\ph}$. In addition, $E_{\ph}$ is stable under $\na$, $S_\na$ and $N_\na$. \qed
	\end{lem}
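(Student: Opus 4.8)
The plan is to reduce the spanning statement to an elementary computation with a \emph{constant} nilpotent matrix, after first making the stability of $E_\ph$ precise. For the stability, I would observe that $N_\na$ is $\laus Cx$--linear and commutes with $S_\na$, so it defines an endomorphism of the semisimple object $(E_{(s)},S_\na)$ in $\mc_s(\laus Cx/C)$. Since $\mathrm{Hom}$ between rank-one connections of distinct types vanishes --- because $\g L_{f_\ph}\otimes\g L_{f_\psi}^{-1}\simeq\g L_{f_\ph-f_\psi}$ has no horizontal section when $[f_\ph]\ne[f_\psi]$ --- any such endomorphism must preserve each isotypical component. Hence $E_\ph$ is stable under $N_\na$, under $S_\na$ by construction, and therefore under $\na=S_\na+N_\na$. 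This is exactly the stability already flagged in the text preceding \eqref{TLJdecom--1}, and I would simply record it carefully here.

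For the spanning claim I would exploit the diagonalizability of $S_\na$: as $E_\ph$ is a direct sum of rank-one $S_\na$--subconnections each isomorphic to $\g L_{f_\ph}$, one can choose a $\laus Cx$--basis $\bb f_1,\dots,\bb f_m$ of $E_\ph$ with $S_\na\bb f_i=f_\ph\bb f_i$. A one-line Leibniz computation then shows that $S_\na-f_\ph$ sends $\sum_j g_j\bb f_j$ to $\sum_j\vt(g_j)\bb f_j$, i.e.\ it is the trivial connection in this basis. Consequently its kernel $V_0:=\ker(S_\na-f_\ph)$ equals the $C$--span $\bigoplus_j C\bb f_j$ (using $\ker\vt=C$ on $\laus Cx$), which is an $m$--dimensional $C$--vector space spanning $E_\ph$ over $\laus Cx$.

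The crux is then that $N_\na$ restricts to $V_0$: since $N_\na$ commutes with $S_\na-f_\ph$ (it commutes with $S_\na$ and with the scalar $f_\ph$), it carries $\ker(S_\na-f_\ph)$ into itself, so $N_\na|_{V_0}$ is a $C$--linear nilpotent operator of some index $q$. For $v\in V_0$ we then have $(\na-f_\ph)v=(S_\na-f_\ph)v+N_\na v=N_\na v\in V_0$, whence inductively $(\na-f_\ph)^k v=N_\na^k v$ for all $k$, and in particular $(\na-f_\ph)^q v=0$. Thus $V_0\subseteq\ker_{\laus Cx}(\na-f_\ph)^q$, and as $V_0$ already spans $E_\ph$ over $\laus Cx$ the kernel spans $E_\ph$, as required. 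I expect the genuinely delicate step to be the stability/$\mathrm{Hom}$--vanishing argument together with isolating the correct $C$--lattice $V_0$ and checking that $N_\na$ preserves it; once $V_0$ is in hand the power computation is immediate and yields the decisive identity $(\na-f_\ph)^k=N_\na^k$ on $V_0$.
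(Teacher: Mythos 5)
Your proof is correct and complete. Be aware, though, that the paper does not supply its own argument for this lemma: it is stated with a bare citation to Levelt and closed immediately, so the only meaningful comparison is with Levelt's original proof, which yours essentially reconstructs. The two ingredients you isolate are the right ones, and they are exactly what the paper later uses implicitly in the proof of Theorem \ref{section2--12}: first, a $\laus Cx$--basis of $E_\ph$ on which $S_\na$ acts through the single scalar $f_\ph$ (after absorbing the $\mathrm{dLog}_s$--ambiguity into the basis vectors), so that $\ker(S_\na-f_\ph)$ is the $C$--lattice $V_0$ spanned by that basis because $\ker\vt=C$ on $\laus Cx$; second, the observation that $N_\na$, being a $\laus Cx$--linear endomorphism commuting with $S_\na$, preserves both the isotypical components (via the vanishing of $\mm{Hom}$ between rank-one connections of distinct types) and the lattice $V_0$, on which $\na-f_\ph$ then coincides with the constant nilpotent $N_\na|_{V_0}$. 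One minor point you may want to record explicitly: your argument establishes $E_\ph\subseteq \laus Cx\cdot\ker_{\laus Cx}(\na-f_\ph)^q$; if the kernel is taken in all of $E_{(s)}$ rather than inside $E_\ph$, the reverse inclusion (needed for the span to be \emph{exactly} $E_\ph$) follows from the same $\mm{Hom}$--vanishing, since on a component of type $\psi\ne\ph$ the operator $S_\na-f_\ph$ has trivial kernel and commutes with the nilpotent $N_\na$, which forces $(\na-f_\ph)^q$ to be injective there. This is a small supplement; the substantive direction is the inclusion you prove.
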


	\begin{ex}\label{ex.202208-13--01}
		Let $(E,\na)\in \mcrs(\lau Cx/C)$. According to \cite[Corollary 4.3]{HdST22}, $(E,\na)\simeq \ga eul(A,V)$ for some $(A,V)\in\bb{End}_C$ with $\spc(A)\subset \tau$. Each $\vr\in \spc(A)$, let ${A_\vr}=A|_{\bb G(A,\vr)}$. The TLJ-decomposition  of $(E,\na)$ is $$(E,\na)=\bigoplus\limits_{\vr\in \spc(A)}\ga eul\big(\bb G(A,\vr),{A_{\vr}}\big).$$
	\end{ex}

	\begin{lem}\label{lem.principal-part} Let $(E,\na)$ be a diagonalizable connection over $\lau Cx$ and have only a isotypical component. Assume that $\mm{Mat}(\na, \bb v)=\mm{diag}(g_{1},\cdots,g_{\mm r}),$
		where $\bb v$ is a $\lau Cx$-basis of $E$ and $g_{1},\cdots,g_{\mm r}\in \lau Cx$. Then $\mm p(g_{1}),\cdots,\mm p(g_\mm r)$ are invariant under base change.
	\end{lem}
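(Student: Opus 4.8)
The plan is to read ``base change'' as the passage from the given diagonalizing basis $\bb v$ to any other $\lau Cx$--basis $\bb w=(w_{1},\dots,w_{\mm r})$ in which $\na$ is again diagonal, say $\mm{Mat}(\na,\bb w)=\mm{diag}(h_{1},\dots,h_{\mm r})$ with $h_{j}\in\lau Cx$; the goal is then to show that the two families $\{\mm p(g_{i})\}$ and $\{\mm p(h_{j})\}$ coincide as multisets. First I would record the effect of the change of basis on the connection matrix. Writing $w_{j}=\sum_{i}P_{ij}v_{i}$ for the invertible transition matrix $P=(P_{ij})$ over $\lau Cx$ and applying Leibniz's rule to $\na(w_{j})=h_{j}w_{j}$ together with $\na(v_{i})=g_{i}v_{i}$, one obtains the system
\[
\vt(P_{ij})=(h_{j}-g_{i})\,P_{ij}\qquad\text{for all }i,j.
\]

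The key step is to exploit this relation entrywise. Fix a pair $(i,j)$ with $P_{ij}\neq0$; then $P_{ij}\in\lau Cx\setminus\{0\}$ and the displayed identity gives
\[
h_{j}-g_{i}=P_{ij}^{-1}\vt(P_{ij})\in{\rm dLog}.
\]
Since every element of ${\rm dLog}$ is a power series $\sum_{n\ge0}a_{n}x^{n}$ with $a_{0}\in\ZZ$, it has no terms of negative degree, whence $\mm p(h_{j})=\mm p(g_{i})$ by the definition of $\mm p$. Thus a nonzero entry in position $(i,j)$ forces the principal parts of $g_{i}$ and $h_{j}$ to agree.

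It remains to upgrade this pointwise matching to an equality of multisets, and here the hypothesis that $(E,\na)$ has a single isotypical component does the work. Indeed, two rank one summands $\g L_{g_{i}}$ and $\g L_{g_{i'}}$ are of the same type precisely when $g_{i}-g_{i'}\in{\rm dLog}$, i.e. when $\mm p(g_{i})=\mm p(g_{i'})$ and the constant terms agree modulo $\ZZ$; so having only one type means all the $\mm p(g_{i})$ equal a common $p\in C[x^{-1}]$. Because $P$ is invertible each column is nonzero, so for every $j$ there is some $i$ with $P_{ij}\neq0$, and the previous step yields $\mm p(h_{j})=\mm p(g_{i})=p$; as there are $\mm r$ indices on each side, both $\{\mm p(g_{i})\}$ and $\{\mm p(h_{j})\}$ equal $p$ with multiplicity $\mm r$. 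I do not expect a genuine obstacle: the only points requiring care are that $P$ (and, for the symmetric matching via $P^{-1}$ needed in the several--component case, its inverse) has entries in $\lau Cx$, so that $P_{ij}^{-1}\vt(P_{ij})$ really lies in ${\rm dLog}$, and that the one--isotypical--component hypothesis is exactly what collapses the bijection problem. Alternatively, the conclusion can be read off from Remark \ref{21.03.2022--1}, since $\mm{Irr}(E,\na)=\{\mm p(\ph):\ph\in\Ph\}$ depends only on $(E,\na)$ and here reduces to the single value $p$.
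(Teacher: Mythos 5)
Your proposal is correct and follows essentially the same route as the paper: the key step in both is the entrywise relation $\vt(P_{ij})=(h_j-g_i)P_{ij}$ for the transition matrix, so that any nonzero entry forces $h_j-g_i\in{\rm dLog}\subset\pos Cx$ and hence equality of principal parts, with invertibility supplying a nonzero entry in each column. The only (harmless) difference is that the paper anchors the comparison to Levelt's scalar normal form $\mm{diag}(f,\dots,f)$ from \cite[Section 6a]{Lev75}, whereas you compare two arbitrary diagonalizing bases and deduce that all $\mm p(g_i)$ coincide directly from the single--isotypical--component hypothesis.
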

	\begin{proof} According to \cite[Section 6a]{Lev75}, there is a $\lau Cx$-basis  $\bb e$ of $E$ such that $\mm{Mat}(\na, \bb e)=\mm{diag}(f,\cdots,f)$. Let $Q=\big(q_{ij}\big)_{\mm r}$ be the matrix of the base change from $\bb e$ to $\bb v$ and $\la_{i}=g_{i}-f$ for each $i$. Since $\mm{Mat}(\na, \bb v)=f\mm I_{\mm r}+Q^{-1}\vt(Q)$, we have $$Q^{-1}\vt(Q)=\mm{diag}(\la_{1},\cdots,\la_{\mm r}).$$ 	This shows that $\vt(Q)=Q\mm{diag}(\la_1,\cdots,\la_{\mm r})$. Hence, $\vt(q_{ij})=\la_{j} q_{ij}$ for all $1\le i,j\le\mm r$. 		For each $1\le i\le\mm r$, because $Q\in\mm{GL}_{\mm r}(\laus Cx)$ so there exists $1\le k\le\mm r$ such that $q_{ik}\ne 0$. Therefore, $\la_{i}\in\pos Cx$, and hence, $\mm p(g_{i})=\mm p(f)$ for all $1\le i\le\mm r$.
	\end{proof}
	
	\begin{rmk}\label{TLJ-form-2402} Assume that $\Ph=\{\ph_1,...,\ph_\ell\}$. There is a $\lau C{x_s}$--basis $(\bb e)$ of $E_{(s)}$ such that $$\mm{Mat}(\na,\bb e)=\mm{diag}\big(\mm J^\flat_{r_{\ph_1}}(f_{\ph_1}),...,\mm J^\flat_{r_{\ph_\ell}}(f_{\ph_\ell})\big).$$ 
	\end{rmk}

	\subsection{Logarithmic decomposition}
	Let us now go further in Levelt's theory which allows us to establish the following decomposition.

	\begin{thm}[Logarithmic decomposition]\label{section2--12}
		Let $(E,\na)\in \mc(\lau Cx/C)$ and $s\in\NN$ be its Turrittin index. Then there exists a unique decomposition
		$\na=\na^{\rm rs} +P_\na$ 
		having the following properties:
		\begin{itemize}
			\item[(i)] $(E,\na^{\rm rs})$ is an object in $\mcrs\big(\lau Cx/C\big)$.
			\item[(ii)] $P_\na$ is a semisimple endomorphism of $E$ with all eigenvalues in $\laus Cx$.
			\item[(iii)] $(E,\na^{\rm rs})$ admits a TLJ-decomposition $(E,\na^{\rm rs}) = \bigoplus\limits_{\vr\in \Sigma}(E_\vr,\na_\vr)$ over $\lau Cx$, where $\Sigma\subset C/\ZZ$. Moreover, for each ${\vr\in \Sigma}$, we have
			\begin{itemize}
				\item[(a)] $P_\na(E_\vr)\subset E_\vr$. 
				\item[(b)] There exists an $\laus Cx$--basis $(\bb e_\vr)$ of $\laus Cx\otu{\lau Cx}E_{\vr}$, $c_\vr\in C$ and $f_{\mm \vr 1},...,f_{\mm \vr \ell}\in x_s^{-1}C[x_s^{-1}]$ such that
				\[
				\begin{cases}
					\mm{Mat}(\na^{\rm rs}|_{E_{\vr}},\bb e_\vr)=\mm J^\flat_{\mm r_\vr}(c_\vr)\\
					\mm{Mat}({\na}|_{E_{\vr}},\bb e_\vr)=\mm{diag}(\mm J^\flat_{\mm r_{\mm \vr1}}(c_\vr+f_{\mm \vr 1}),...,\mm J^\flat_{\mm r_{\mm \vr \ell}}(c_\vr+f_{\mm \vr \ell})).
				\end{cases}
				\]
			\end{itemize}  
		\end{itemize}
	\end{thm}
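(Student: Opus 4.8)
The plan is to construct the decomposition first over the ramified base $\laus Cx$, where the Turrittin index drops to $1$, and then to descend it to $\lau Cx$ by Galois descent along $\laus Cx/\lau Cx$. First I would pass to $E_{(s)}$ and invoke the TLJ-decomposition \eqref{TLJdecom--1}, $(E_{(s)},\na)=\bigoplus_{\ph\in\Ph}(E_\ph,\na_\ph)$. On each isotypical component I define $P_\na$ to act as multiplication by the principal part $\mm p(f_\ph)\in x_s^{-1}C[x_s^{-1}]$ of a representative $f_\ph$ of $\ph$; this is well defined because two representatives of $\ph$ differ by a constant in $s^{-1}\ZZ$, which $\mm p$ ignores. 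Setting $\na^{\rm rs}:=\na-P_\na$, the operator $\na^{\rm rs}$ still obeys Leibniz's rule since $P_\na$ is $\laus Cx$--linear, and by Remark \ref{TLJ-form-2402} it has on $E_\ph$ the matrix $\mm J^\flat_{r_\ph}(f_\ph-\mm p(f_\ph))$, whose diagonal entries $f_\ph-\mm p(f_\ph)$ lie in $\pos C{x_s}$. Thus over $\laus Cx$ the operator $P_\na$ is semisimple with eigenvalues $\mm p(f_\ph)\in\laus Cx$, while all the irregular values of $\na^{\rm rs}$ vanish.

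Next I would descend both operators. The Galois group $G=\mathrm{Gal}(\laus Cx/\lau Cx)\cong\mu_s$ acts on $E_{(s)}$ semilinearly and permutes the components by $g\cdot E_\ph=E_{g\cdot\ph}$; since $g$ acts through $x_s\mapsto\zeta x_s$, an automorphism preserving the splitting into negative and non-negative powers, it carries $\mm p(f_\ph)$ to $\mm p(f_{g\cdot\ph})$. Hence the family $\{\mm p(f_\ph)\,\id_{E_\ph}\}$ is $G$--equivariant, so $P_\na$ is $G$--equivariant and descends to a $\lau Cx$--linear endomorphism of $E$; consequently $\na^{\rm rs}=\na-P_\na$ descends to a connection on $E$ over $\lau Cx$. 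This would give (ii). For (i), the irregular values of $(E,\na^{\rm rs})$ are intrinsic (Remark \ref{21.03.2022--1}) and were arranged to vanish, so $(E,\na^{\rm rs})$ is regular singular.

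Because $(E,\na^{\rm rs})$ is regular singular it is unramified, so its TLJ-decomposition $(E,\na^{\rm rs})=\bigoplus_{\vr\in\Sigma}(E_\vr,\na_\vr)$ already lives over $\lau Cx$ with $\Sigma\subset C/\ZZ$ the set of exponents. The key point for (a) is that the constant term $c_\ph$ of $f_\ph$ is fixed by $G$ (the substitution $x_s\mapsto\zeta x_s$ fixes the degree-zero coefficient), so each $\vr$ corresponds to the union of the components with $c_\ph=\vr$; thus $(E_\vr)_{(s)}=\bigoplus_{c_\ph=\vr}E_\ph$ is $P_\na$--stable, whence $P_\na(E_\vr)\subset E_\vr$ after descent. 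For (b) I would fix one $E_\ph$ with $c_\ph=c_\vr$: the regular singular connection $\na^{\rm rs}|_{E_\ph}$ has the single exponent $c_\vr\in\tau$, so by the canonical form for regular singular connections (Example \ref{ex.202208-13--01}, resting on \cite[Section 6a]{Lev75}) there is an $\laus Cx$--basis $\bb e_\ph$ with $\mm{Mat}(\na^{\rm rs}|_{E_\ph},\bb e_\ph)=\mm J^\flat_{r_\ph}(c_\vr)$; here the positive-order part of $f_\ph-\mm p(f_\ph)$ is gauge-trivial since $\g L_g\simeq\g L_{c_\vr}$ whenever $g-c_\vr\in{\rm dLog}_s$. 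As $P_\na$ is the scalar $\mm p(f_\ph)$ on $E_\ph$, it is unchanged by this base change, so in the same basis $\mm{Mat}(\na|_{E_\ph},\bb e_\ph)=\mm J^\flat_{r_\ph}(c_\vr+\mm p(f_\ph))$. Concatenating these bases over the finitely many $\ph$ with $c_\ph=c_\vr$ and putting $f_{\vr j}=\mm p(f_{\ph_j})$ yields the simultaneous normal forms of (b).

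Uniqueness follows because the data fixing $P_\na$ are intrinsic: in the basis of (b) any admissible semisimple part is forced to be the scalar $f_{\vr j}$ on the $j$-th block, and $f_{\vr j}=\mm p(f_{\ph_j})$ is determined by $(E,\na)$ alone, being the irregular value on $E_{\ph_j}$ (Remark \ref{21.03.2022--1}) and invariant under base change (Lemma \ref{lem.principal-part}); hence $P_\na$, and then $\na^{\rm rs}=\na-P_\na$, are uniquely determined. The main obstacle is precisely the descent step: one must verify that the componentwise construction over $\laus Cx$ is genuinely $G$--equivariant and that the exponents indexing the decomposition over $\lau Cx$ coincide with the $G$--invariant constant terms $c_\ph$, so that the descended $P_\na$ respects the refined components $E_\vr$. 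Once this canonicity is secured, both existence and uniqueness follow as above.
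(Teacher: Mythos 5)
Your proposal follows essentially the same route as the paper: pass to $E_{(s)}$, subtract the principal part $\mm p(\ph)$ on each isotypical component to define $\na^{\rm rs}$, and descend along $\mm{Gal}(\lau C{x_s}/\lau Cx)$ using $\mm p(\si\ph)=\si\,\mm p(\ph)$. The only cosmetic differences are that the paper certifies regularity of $\na^{\rm rs}$ by Galois-descending a Deligne--Manin lattice of $E_{(s)}$ rather than by invoking the vanishing of the irregular values, and it obtains uniqueness by reconstructing the Jordan--Levelt decomposition from a competing splitting and citing its uniqueness, rather than arguing directly from the base-change invariance of the principal parts; both variants rest on the same underlying facts.
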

	\begin{proof} \textit{Existence.} According to \eqref{TLJdecom--1}, the connection $(E,\na)$ admits a TLJ-decomposition  $(E_{(s)},\na)=\bigoplus\limits_{\ph\in\Ph} 
		(E_{\ph},\na_{\ph})$
	over $\laus Cx$. For each $\ph\in\Ph$, let $c_\ph\in C$  such that  $c_\ph+\mm p(\ph)$ is a representative of $\varphi$. Let us {\it define}  on the $\lau C {x_s}$--space  $E_{\ph}$ a connection by ${\na}_{\ph}^{\mm{rs}} := \na_{\ph} - \mm p(\ph)\id_{E_{\ph}}$. Consider the Levelt-Jordan decomposition $\na=S_\na+N_\na$ of $\na$ and denote by $S_\ph=S_\na|_{E_\ph}$. According to \cite[Section 6a]{Lev75}, there exists a $\laus Cx$--basis $(\bb e_\ph)$ of $E_{\ph}$ such that $\mm{Mat}(S_\ph,\bb e_\ph) =(c_\ph+\mm p(\ph))\mm I_{r_\ph}$ and $\mm{Mat}(N_\na, \bb e_\ph)=\mm J^\flat_{\mm r_\ph}(0)$.
	It is clearly that the $C$-subspace $C \bb e_\ph$ of $E_{\ph}$ spanned by $(\bb e_\ph)$ is invariant under $N_\na$. Hence, $\mm{Mat}({\na}_{\ph}^{\mm{rs}},\bb e_\ph) = c_\ph\mm I_{r_\ph}+N_{\ph}$ which is a matrix in $\mm{M}_{r_\ph}(C)$.

	Let $\ze$ be a primitive $s^\mm{th}$ root of unity and   $\si:\lau C{x_s}\to\lau C{x_s}$ be the automorphism determined by    $x_s\mapsto\ze x_s$. Note that the Galois group $\mm G$ of the extension $\lau C{x_s}/\lau Cx$ is generated by $\si$. 
		Let us also abuse notation and write $\si$ for the map $\si\ot \id$ from $E_{(s)}$ to itself. 
		Clearly, $\na\circ\si=\si\circ\na, S_\na\circ\si=\si\circ S_\na$, and $	\si E_{\ph} =   E_{\si(\ph)}$. Now, $\mm p(\si\ph)=\si\mm p(\ph)$, and  hence $
		\si\circ\bigoplus\limits_{\ph\in\Ph}{\na}_{\ph}^{\rm rs}  =\bigoplus\limits_{\ph\in\Ph}{\na}_{\ph}^{\rm rs} \circ \si$.
	Note that $E=\big\{e\in E_{(s)}:\si e=e \text{ for all } \si\in \mm G\big\}$. Let $\na^{\rm rs}$ is the restriction of $\bigoplus\limits_{\ph\in\Ph}{\na}_{\ph}^{\rm rs}$ to $E$ and $P_\na:=\na-\na^{\rm rs}$. We see that $(E,\na^{\rm rs})$ is an object in $\mcrs\big(\lau Cx/C\big)$. Indeed, let $\ce_s$ is a Deligne-Manin lattice of $E_{(s)}$. According to \cite[Theorem 7.8]{HdST22}, $\si$ is extended to an action on $\ce_{s}$ by the functor \[\ga_s:\mc_{\log,s}(\pos C{x_s}/C)\to\mc_{s}(\lau  C{x_s}/C)\]
	which is compatible to the action on $E_{(s)}$. Let $\ce=\big\{e\in \ce_{s}:\si e=e \text{ for all } \si\in \mm G\big\}$. We obtain that $(\ce,\na^{\rm rs})$ is a Deligne-Manin lattice of $(E,\na^{\rm rs})$. By construction, it is obvious to see that the  $\lau Cx$--linear endomorphism $P_\na=\na-{\na}^{\rm rs}$ is semisimple with all eigenvalues in  $x_s^{-1}C[x_s^{-1}]$.

	Example \ref{ex.202208-13--01} shows that there exists $\Sigma\subset C/\ZZ$ and a TLJ-decomposition $$(E,\na^{\rm rs}) = \bigoplus\limits_{\vr\in \Sigma}(E_\vr,\na_\vr)$$ of $(E,\na^{\rm rs})$ over $\lau Cx$. Note that $E_\vr=\{e\in \bigoplus\limits_{\ph\in\Ph:[c_\ph]=\vr} E_{\ph}\,:\,\si e=e\}.$  
	Because $\mm p(\si\ph)=\si\mm p(\ph)$, we obtain that $E_\vr$ is invariant under $\mm G$		for each $\vr\in \vr$. Clearly, each $E_\vr$ is invariant under $P_\na$. In addition, the last assertion is clearly.

		\textit{Uniqueness.} Assume that $\na=\dot\na +\dot P$ satisfies the properties (i)--(iii).	According to Example \ref{ex.202208-13--01}, there exists $\Ga\subset C/\ZZ$ and TLJ-decomposition $(E,\dot \na) =  \bigoplus\limits_{\ga\in \Ga }E_{\ga}$ of $(E,\dot \na)$ over $\lau Cx$. For each $\ga\in \Ga$, there exist a $\laus Cx$--basis  $(\bb e_{\ga})=\{e_{\ga}^1,\cdots,e_{\ga}^{\mm r_{\ga}}\}$ of $\laus Cx\ot E_{\ga}$ such that 
		$\mm{Mat}(\dot\na,\bb e_{\ga})=\mm J^\flat_{\mm r_{\ga}}(c_{\ga})$ and $\mm{Mat}(P_{\na}|_{E_{\ga}},\bb e_\ga)=\mm{diag}(f_\ga^1,...,f_\ga^{\mm r_\ga})$, 		where $c_\ga\in C$, ${\mm r_\ga}=\dim_{\lau Cx}{E_{\te}}$ and $f_\ga^1,...,f_\ga^{\mm r_\ga}\in x_s^{-1}C[x_s^{-1}]^{\mm r_\ga}$. 	Let us define the connection $\dot S_{\ga}:E_{\ga}\to E_{\ga}$ by declaring 
		\begin{equation}\label{14.03.2022-01}
			\mm{Mat}(\dot S_{\ga},\bb e_{\ga})=c_{\ga}\mm I_{\mm r_{\ga}}+\mm{Mat}(\dot P,\bb e_{\ga})=\mm{diag}(c_{\ga}+f_{\ga}^1,...,c_\ga+f_\ga^{\mm r_\ga})
		\end{equation}
		which is a diagonalizable connection over $\lau Cx$. Note that  $\dot N_{\ga}=\na|_{E_{\ga}}-\dot S_{\ga}$ is a $\laus Cx$--linear endomorphism of $E_{\ga}$ satisfying $$\mm{Mat}(\dot N_{\ga},\bb e_{\ga})=\mm{Mat}(\dot\na,\bb e_{\ga})-c_{\ga}\mm I_{\mm r_{\ga}}=\mm J^\flat_{\mm r_{\ga}}(0)$$ which is a matrix in $\mm{Mat}_{\mm r_{\ga}}(C),$ hence $\dot N_{\ga}$ is nilpotent. 
		For each $e\in E_{\ga}$, there exist $a_1,\cdots,a_{\mm r_{\ga}}\in \laus Cx$ such that $e=a_1e_{\ga}^1,\cdots+a_{\mm r_{\ga}}e_{\ga}^{\mm r_{\ga}}$. By computing, we get 
		$$\dot S_{\ga}\dot N_{\ga}(e)=\sum_{i=1}^{{\mm r_{\ga}}-1}\big(\vt(a_i)+a_if_{\ga}\big)e_{\ga}^{i+1}=\dot N_{\ga}\dot S_{\ga}(e),$$
		hence $\dot S_{\ga}\dot N_{\ga}=\dot N_{\ga}\dot S_{\ga}$.
		
		Hence, we obtain that $\na|_{E_{\ga}}=\dot S_{\ga}+\dot N_{\ga}$ is the Jordan-Levelt decomposition of $\na|_{E_{\ga}}$. Let $\dot S$ be the connection on $E$ such that $\dot S|_{E_\ph}=\dot S_\ph$ for each $\ph\in \ga$ and $\dot N=\na-\dot S$. Then 
		$\na=\dot S+\dot N$ 		is the Jordan-Levelt decomposition of $\na$. According to Theorem \ref{section2--4}, $S_\na=\dot S$ and $N_\na=\dot N$. Therefore, $S_\na$ preserves each $E_{\ga}$ and so does $N_\na$. As a conclusion, $
		S_\na|_{E_{\ga}}= S_{\ga}$
		for all ${\ga}\in \Ga$. By combining with  \eqref{14.03.2022-01}, we obtain that $P_{\na}|_{E_{{\ga},(s)}}= \dot P|_{E_{{\ga},(s)}}$. Hence, $\dot P =P_\na$, and $\dot\na=\na^{\rm rs}$.
	\end{proof}

	The condition Theorem \ref{section2--12}.(iii) is necessary. The following is an illustration.
	\begin{ex}\label{ex.20220812--01}
		Consider $E=\lau Cx\mm e_1\op \lau Cx\mm e_2$ and $\na: E\aro E$ given by 
		\[\na(\mm e_1)=\dfrac{1}{x}\mm e_1+\mm e_2; \quad \na(\mm e_2)=-\dfrac{1}{x}\mm e_2.\]		
		
		By \cite[Section 2, Lemma]{Lev75}, there exists $G=\begin{pmatrix}
			1&g_{1}\\
			g_{2}&1
		\end{pmatrix}\in \mm{GL}_2(\lau Cx)$ and $a_{1},a_{2}\in x\pos Cx$ such that		
		\[\mm{diag}(\dfrac{1}{x}+a_1, -\dfrac{1}{x}+a_2)
		=
		G^{-1}\begin{pmatrix}
			\dfrac{1}{x}&0\\
			1&-\dfrac{1}{x}
		\end{pmatrix}G + G^{-1}\vt(G).\]

		Let $H=\mm{diag}(\mm{e}^{-\int x^{-1}a_1dx}, \mm{e}^{-\int x^{-1}a_2dx})$, we have $H^{-1}=\mm{diag}(\mm{e}^{\int x^{-1}a_1dx}, \mm{e}^{\int x^{-1}a_2dx}).$ 		Hence,
		\begin{align*}
			H^{-1}\mm{diag}(\dfrac{1}{x}+a_1, -\dfrac{1}{x}+a_2)H=&\mm{diag}(\dfrac{1}{x}+a_1, -\dfrac{1}{x}+a_2);\\
			H^{-1}\vt(H)=&-\mm{diag}(a_1, a_2).
		\end{align*}

		Let $\bb f$ be $\lau Cx$-basis of $E$ obtained from $\{\mm e_1,\mm e_2\}$ by the base change $GH$. Then, 
		\[
		\mm{Mat}(\na,\bb f) =
		H^{-1}\mm{diag}(\dfrac{1}{x}+a_1, -\dfrac{1}{x}+a_2)H + H^{-1}\vt(H)=\mm{diag}(\dfrac{1}{x}, -\dfrac{1}{x}).\]
		
		Let $\na^{\rm rs}: E\to E$ be given by $\mm{Mat}(\na^{\rm rs}, \bb f)=0$. Then, $\mm{Mat}(P_\na,\bb f)=\mm{diag}(\dfrac{1}{x}, -\dfrac{1}{x})$. Moreover, $\na=\na^{\rm rs}+P_\na$ satisfies (i)-(iii) of Theorem \ref{section2--12}. Let now consider $\na_1^{\rm rs}$ on $M$ defined by $\mm{Mat}(\na_1^{\rm rs},\bb e)=\begin{pmatrix}
			0&0\\
			1&0
		\end{pmatrix}$. The decomposition $\na=\na_1^{\rm rs}+P_1$ satisfies (i) and (ii), but not (iii). It is obvious that $(g_1,g_2)\ne (0,0)$, so we get $P_1\ne P_\na$ since 
		 $$\mm{Mat}(P_1,\bb f)=\dfrac{1}{1-g_{2}g_{1}}\begin{pmatrix}
			\dfrac{1}{x}+\dfrac{g_{2}g_{1}}{x}&2\dfrac{g_{1}}{x}\mm{e}^{\int x^{-1}(a_1-a_2)dx}\\
			-2\dfrac{g_{2}}{x}\mm{e}^{\int x^{-1}(a_2-a_1)dx}&-\dfrac{1}{x}-\dfrac{g_{2}g_{1}}{x}
		\end{pmatrix}.$$
	\end{ex}

	\subsection{Deligne-Malgrange lattices}\label{section2--02} 		
	Fix $(E,\na)\in\mc(\lau Cx/C)$, $s\in \NN$ be its Turrittin index and  $\Ph\subset \XX_s$ the set of irregular values of $\na$. Recall that the Galois group $\mm G=\mm{Gal}(\lau C{x_s}/\lau Cx)$ is cyclic. Let $\na=\na^{\rm rs}+P_\na$ be the logarithmic decomposition of $\na$ and $\ce\in \mclog(\pos C{x}/C)$ be a \textit{Deligne-Manin lattice} lattice for $(E,\na^{\rm rs})$  having all exponents in $\tau$. In \cite{mochizuki09b}, Mochizuki calls  $\ce$ the \textit{Deligne-Malgrange} lattice. We recall the  TLJ-decomposition $(E_{(s)}, \na)=\bigoplus\limits_{\ph \in \Ph} (E_{\ph},\na_{\ph})$ of $(E, \na)$.
	The object $\ce_{(s)}=\pos C{x_s}\otu{\pos Cx} \ce$ is automatically a Deligne-Manin lattice for $(E_{(s)},\nabla^{\mm {rs}})$. Similar as in the proof of Theorem \ref{section2--12}, $\ce_{(s)}$  is closed by $\mm G$.

	\begin{prp}\label{section2--14} Let $(E,\na)\in\mc(\lau cx)$ and $\na=\na^{\rm rs}+P_\na$ its logarithmic decomposition. Let write $\na_{\ph}^{\rm rs}=\na^{\rm rs}|_{E_\ph}$ for each $\ph\in \Ph$. Then, $(E_{\ph},\na_{\ph}^{\rm rs})$ admits a Deligne-Manin lattice $\ce_{\ph}$ for  with respect to $\tau$ such that 
		$\ce_{(s)}=\bigoplus\limits_{\ph \in \Ph} \ce_{\ph}.$
	\end{prp}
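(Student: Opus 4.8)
The plan is to construct each $\ce_\ph$ by hand from the constant normal form produced in the proof of Theorem \ref{section2--12}, to verify directly that it is a Deligne--Manin lattice for $(E_\ph,\na_\ph^{\rm rs})$, and then to identify the direct sum $\bigoplus_\ph\ce_\ph$ with $\ce_{(s)}$ by invoking the uniqueness of the Deligne--Manin lattice. First I would recall from the proof of Theorem \ref{section2--12} that, for each $\ph\in\Ph$, there is a $\lau C{x_s}$--basis $(\bb e_\ph)$ of $E_\ph$ and a representative $c_\ph+\mm p(\ph)$ of $\ph$, which we normalize so that $c_\ph\in\tau$, for which $\mm{Mat}(\na_\ph^{\rm rs},\bb e_\ph)=c_\ph\mm I_{r_\ph}+N_\ph$ with $N_\ph=\mm J^\flat_{r_\ph}(0)$; in particular this matrix has constant entries in $\mm M_{r_\ph}(C)$. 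I then define $\ce_\ph$ to be the $\pos C{x_s}$--submodule of $E_\ph$ spanned by $(\bb e_\ph)$.

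Second, I would check that $\ce_\ph$ is a Deligne--Manin lattice for $(E_\ph,\na_\ph^{\rm rs})$. Since $\vt$ maps $\pos C{x_s}$ into $x_s\pos C{x_s}$ and the matrix of $\na_\ph^{\rm rs}$ is constant, the Leibniz rule gives $\na_\ph^{\rm rs}(\ce_\ph)\subset\ce_\ph$, so $(\ce_\ph,\na_\ph^{\rm rs})$ is a logarithmic lattice of $(E_\ph,\na_\ph^{\rm rs})$ in $\mc_{\log,s}(\pos C{x_s}/C)$. Reducing modulo $x_s$ shows that $\mm{res}_{\na_\ph^{\rm rs}}$ is represented by $c_\ph\mm I_{r_\ph}+N_\ph$, whose only eigenvalue is $c_\ph$; hence $\mm{Exp}(\na_\ph^{\rm rs})=\{c_\ph\}\subset\tau$ and $\ce_\ph$ is indeed Deligne--Manin with respect to $\tau$.

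Third, I would assemble $\ce':=\bigoplus_{\ph\in\Ph}\ce_\ph$. Because $\na^{\rm rs}=\bigoplus_\ph\na_\ph^{\rm rs}$ respects the splitting $E_{(s)}=\bigoplus_\ph E_\ph$, the module $\ce'$ is a logarithmic lattice of $(E_{(s)},\na^{\rm rs})$ whose set of exponents is $\{c_\ph:\ph\in\Ph\}\subset\tau$, so $\ce'$ is a Deligne--Manin lattice for $(E_{(s)},\na^{\rm rs})$. On the other hand, as already noted in the excerpt, $\ce_{(s)}=\pos C{x_s}\otu{\pos Cx}\ce$ is a Deligne--Manin lattice for the very same connection. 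The main obstacle is precisely this final identification: the construction only produces two a priori distinct Deligne--Manin lattices of $(E_{(s)},\na^{\rm rs})$, and it is the uniqueness of the Deligne--Manin (canonical) lattice under the normalization $\tau\cap\ZZ=\{0\}$ (\cite[Theorem 3.3.1]{malgrange96}; see also \cite[Theorem 4.5]{HdST22}) that forces any two lattices with exponents in $\tau$ to agree. Applying this uniqueness yields $\ce_{(s)}=\ce'=\bigoplus_{\ph\in\Ph}\ce_\ph$, which is the assertion; every step other than the uniqueness input is a direct verification using the constant normal form of Theorem \ref{section2--12}.
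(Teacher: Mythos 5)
Your proposal is correct and follows essentially the same route as the paper: produce a Deligne--Manin lattice for each isotypical component $(E_\ph,\na_\ph^{\rm rs})$ and then use the uniqueness (equivalently, the lifting of $\id_{E_{(s)}}$ to the level of logarithmic models, cited in the paper as \cite[Theorem 6.8]{HdST22}) to identify $\ce_{(s)}$ with the direct sum. The only difference is cosmetic: you build each $\ce_\ph$ explicitly as the $\pos C{x_s}$--span of the constant normal-form basis from Theorem \ref{section2--12}, whereas the paper simply invokes the existence of such lattices.
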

	\begin{proof} For each $\ph\in\Ph$, let $\cf_{\ph}$ be a Deligne-Manin lattice of $(E_{\ph},\na_{\ph}^{\rm rs})$  with respect to $\tau$. 	By \cite[Theorem 6.8]{HdST22}, the identity  $\id_{E_{(s)}}$	is lifted to an isomorphism $$\ce_{(s)}\simeq\bigoplus\limits_{\ph \in \Ph} \cf_{\ph}$$ in $\mc_{\mm{log, s}}(\pos C{x_s}/C)$. 
	\end{proof}

	\section{Connections with an action of an Artinian ring}\label{section3--0} 
	Fix a local $C$-algebra $\La$ with maximal ideal $\g l$ and  $\dim_C\La$ is finite. 
	
	\begin{dfn} \label{section3--1} We define $\mc(\lau Cx/C)_{(\La)}$ as the category whose 
		\begin{enumerate}
			\item[{\it objects}]   are couples $\big((E,\na), \al\big)$ with $(E,\na)\in\mc(\lau Cx/C)$ and $\al:\La\to\mm{End}((E,\na))$ is a morphism of $C$-algebras, and an 
			\item[{\it arrow}] from $\big((E,\na), \al\big)$ to $\big((E,\na'), \al'\big)$ is a morphism $\ph:(E,\na)\to (E,\na')$ such that $\al'(\la)\circ\ph=\ph\circ\al(\la)$ for all $\la\in \La$.   
		\end{enumerate}
	\end{dfn}

	Given $(E,\na)\in \mc(\lau Cx/C)_{(\La)}$, $s\in \NN$ its Turrittin index. Denote by $\mm G$ the Galois group $\mm{Gal}(\lau C{x_s}/\lau Cx)$. Then $(E,\nabla)$  induces the  connection $\ov\nabla$ on $\ov E=E/\g lE$. Similarly, we have $(\ov{E_{(s)}},\ov\nabla)$ and an isomorphism 
	$\Phi: \ov E_{(s)}  \longrightarrow \ov{E_{(s)}}$
	in $\mc\big(\lau C{x_s}/C\big)$. Taking reduction modulo $\g l$ of $(E_{(s)},\na)=\bigoplus\limits_{\ph\in \Ph} (E_{\ph},\na_{\ph})$, we get 	
	\begin{equation}\label{section3--02} 
		\big(\ov{E_{(s)}}, \ov{\na}\big)=\bigoplus_{\ph\in \Ph} \big(\ov{E_{\ph}},\ov{\na_{\ph}}\big).
	\end{equation}
	\begin{prp}\label{section3--2}
		Let $(E,\na)\in \mc\big(\lau Cx/C\big)_{(\Lambda)}$  and $s\in\NN$ its Turrittin index. 
		Then, the following claims hold:
		\begin{enumerate}
			\item[(1)] If  $E_{\ph}\ne \{0\}$ then $\ov{E_{\ph}}\ne \{0\}$.
			\item[(2)] 
			The decomposition \eqref{section3--02} 
			is compatible to $\La$. 
		\end{enumerate}
	\end{prp}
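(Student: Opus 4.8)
The plan is to establish, before performing any reduction, that the action of $\La$ preserves each isotypical component $E_\ph$ occurring in the TLJ-decomposition \eqref{TLJdecom--1} of $E_{(s)}$; both claims then follow by short arguments. For $\la\in\La$, the endomorphism $\al(\la)$ is by definition a horizontal $\lau Cx$--linear endomorphism of $(E,\na)$. Base-changing along $\lau Cx\hookrightarrow\laus Cx$ produces a $\laus Cx$--linear endomorphism of $E_{(s)}$ that still commutes with $\na$. Fix $\ph\in\Ph$ with representative $f_\ph\in\laus Cx$. Since $\al(\la)$ is $\laus Cx$--linear it commutes with multiplication by $f_\ph$, and since it is horizontal it commutes with $\na$; hence it commutes with $(\na-f_\ph)^{q}$ for every $q$. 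By Lemma~\ref{section2--7} there is a $q$ for which $\ker_{\laus Cx}(\na-f_\ph)^{q}$ spans $E_\ph$ over $\laus Cx$. As $\al(\la)$ preserves this kernel and is $\laus Cx$--linear, it preserves its $\laus Cx$--span $E_\ph$. Thus each $E_\ph$ is $\La$--stable and the decomposition \eqref{TLJdecom--1} respects the $\La$--action.

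Granting this, claim (2) is immediate. The $\La$--stability of each $E_\ph$ gives $\g lE_{(s)}=\bigoplus_{\ph\in\Ph}\g lE_\ph$, so reduction modulo $\g l$ yields $\ov{E_{(s)}}=\bigoplus_{\ph\in\Ph}\ov{E_\ph}$ with $\ov{E_\ph}=E_\ph/\g lE_\ph$, and the endomorphism induced by each $\al(\la)$ carries each summand into itself. This is precisely the statement that \eqref{section3--02} is compatible with $\La$.

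For claim (1) I would invoke that $\La$ is local Artinian, so its maximal ideal $\g l$ is nilpotent, say $\g l^{N}=0$. If $\ov{E_\ph}=0$, that is $\g lE_\ph=E_\ph$, then iterating gives $E_\ph=\g l^{N}E_\ph=0$. Taking contrapositives, $E_\ph\ne\{0\}$ forces $\ov{E_\ph}\ne\{0\}$. This is the expected Nakayama-type conclusion, but here it follows directly from the nilpotency of $\g l$ and requires no finiteness of $E_\ph$ over $\La$.

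The main obstacle is the first paragraph, namely showing that $\La$ respects the decomposition. Everything hinges on the canonical description of the isotypical component $E_\ph$ as the $\laus Cx$--span of $\ker(\na-f_\ph)^{q}$ furnished by Lemma~\ref{section2--7}: it is this intrinsic characterization, together with horizontality and $\laus Cx$--linearity of $\al(\la)$, that forces $E_\ph$ to be preserved. The remaining points, the direct-sum behaviour of reduction modulo $\g l$ and the nilpotency argument, are routine.
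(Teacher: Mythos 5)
Your proposal is correct and follows essentially the same route as the paper: claim (1) via nilpotency of $\g l$, and claim (2) via the canonicity of the isotypical components. The paper disposes of (2) with the single remark that it ``follows automatically by the very definition of the TLJ-decomposition''; your first paragraph (horizontality plus $\laus Cx$-linearity of $\al(\la)$ forcing preservation of $\ker(\na-f_\ph)^q$ and hence of $E_\ph$) is exactly the argument that remark leaves implicit.
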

	
	\begin{proof} Because $\g l\subset\Lambda$ is a nilpotent ideal of $\La$, so $\g lE_{\ph}\subsetneq E_{\ph}$. Hence, if $E_{\ph}\ne \{0\}$ then $\ov{E_{\ph}}\ne \{0\}$.
		Finally, the second claim then follows automatically by the very definition of the TLJ-decomposition. 
		%
		%
	\end{proof}
	As a consequence, we arrive at
	\begin{prp} \label{section4--4} Let $(E,\na)\in\mc\big(\lau Cx/C\big)_{(\La)}$. The following equality holds: 
		\[
		\text{Turrittin index of $(E,\na)$} \quad=\quad\text{Turrittin index of $(E/\g lE,\ov \na)$}.
		\]\qed
	\end{prp}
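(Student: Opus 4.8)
The plan is to compare the two indices by showing that reduction modulo $\g l$ leaves the set of irregular values unchanged, and then to read off both Turrittin indices from this common invariant. Write $s$ for the Turrittin index of $(E,\na)$ and $\bar s$ for that of $(\ov E,\ov\na)$ with $\ov E=E/\g lE$. I will establish $\bar s\mid s$ and $s\le\bar s$, which together force $s=\bar s$.

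First I would prove $\bar s\mid s$, hence $\bar s\le s$. By definition of the Turrittin index, $(E_{(s)},S_\na)$ is diagonalizable, so $S_\na$ is a direct sum of rank-one connections over $\lau C{x_s}$. Reducing modulo $\g l$ and invoking the compatibility of \eqref{section3--02} with $\La$ from Proposition \ref{section3--2}(2), the semisimple part of $\ov\na$ on $\ov E_{(s)}$ is again a direct sum of rank-one pieces, hence diagonalizable. Thus $s$ is an admissible ramification for $\ov E$, and since the admissible ramifications are exactly the multiples of $\bar s$, we get $\bar s\mid s$.

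The substance lies in the reverse inequality $s\le\bar s$, for which I would show $\mm{Irr}(\ov E,\ov\na)=\mm{Irr}(E,\na)$. Starting from the TLJ-decomposition $(E_{(s)},\na)=\bigoplus_{\ph\in\Ph}(E_\ph,\na_\ph)$, where $\Ph$ collects exactly those $\ph$ with $E_\ph\ne\{0\}$, one has $\mm{Irr}(E,\na)=\{\mm p(\ph):\ph\in\Ph\}$ by Remark \ref{21.03.2022--1}. Reduction modulo $\g l$ produces \eqref{section3--02}, namely $(\ov E_{(s)},\ov\na)=\bigoplus_{\ph\in\Ph}(\ov E_\ph,\ov{\na_\ph})$, which by Proposition \ref{section3--2}(2) is the isotypical TLJ-decomposition of $\ov\na$. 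The key input is Proposition \ref{section3--2}(1): since $E_\ph\ne\{0\}$ forces $\ov E_\ph\ne\{0\}$, no isotypical component vanishes under reduction. Because the leading part $\mm p(\ph)$ is a scalar in $\lau C{x_s}$ that does not involve the $\La$-action, each $\ov{\na_\ph}$ remains of type $\ph$, so the set of types occurring in $\ov\na$ is again exactly $\Ph$ and $\mm{Irr}(\ov E,\ov\na)=\mm{Irr}(E,\na)$.

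Finally I would use that the Turrittin index is determined by the set of irregular values alone: it is the least ramification $s'$ over which every $\mm p(\ph)$ is representable, equivalently over which $S_\na$ splits into rank-one summands, the regular-singular exponent part already splitting over $x$ without ramification. Since $E$ and $\ov E$ share the same $\mm{Irr}$ and the Turrittin index of $E$ is the minimal such $s'$, the identical minimality computes the Turrittin index of $\ov E$, giving $s\le\bar s$ and hence $s=\bar s$. I expect the main obstacle to be the bookkeeping of the third paragraph, where one must verify that reduction neither merges distinct types nor perturbs the leading terms $\mm p(\ph)$, so that Proposition \ref{section3--2}(1) yields a genuine equality rather than a mere inclusion of the two sets of irregular values.
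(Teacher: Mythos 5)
Your argument is correct and follows essentially the same route as the paper, which states Proposition \ref{section4--4} as an immediate consequence of Proposition \ref{section3--2}: reduction modulo $\g l$ preserves the TLJ-decomposition \eqref{section3--02} with no isotypical component vanishing, so the set of characters (hence the set of irregular values, hence the Turrittin index) is unchanged. Your extra bookkeeping with the two inequalities $\bar s\mid s$ and $s\le\bar s$ is a more explicit version of the same observation.
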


	Putting Theorem \ref{section2--12} and Propostion \ref{section3--2} together, we arrive at
	\begin{thm}
		\label{section3--3} Let $(E,\na)\in \mc(\lau Cx/C)_{(\La)}$ be given. Then there exists a unique decomposition up to isomorphism $\na=\na^{\rm rs} +P_\na$ having the following properties:
		\begin{itemize}
			\item[(i)] $(E,\na^{\rm rs})$ is an object in $\mcrs\big(\lau Cx/C\big)_{(\La)}$.
			\item[(ii)] $P_\na\in \mm{End}_{\lau \La x}(E)$ and it is diagonalizable over $\laus Cx$ as a $\lau Cx$--linear endomorphism.
			\item[(iii)] $(E,\na^{\rm rs})$ admits a TLJ-decomposition $(E,\na^{\rm rs}) = \bigoplus\limits_{\vr\in \Sigma}(E_\vr,\na_\vr)$ over $\lau Cx$, , where $\Sigma\subset C/\ZZ$. Moreover, for each ${\vr\in \Sigma}$, we have
			\begin{itemize}
				\item[(a)] $P_\na(E_\vr)\subset E_\vr$. 
				\item[(b)] There exists an $\laus Cx$--basis $(\bb e_\vr)$ of $\laus Cx\otu{\lau Cx}E_{\vr}$, $c_\vr\in C$ and $f_{\mm \vr 1},...,f_{\mm \vr \ell}\in x_s^{-1}C[x_s^{-1}]$ such that
				\[
				\begin{cases}
					\mm{Mat}(\na^{\rm rs}|_{E_{\vr}},\bb e_\vr)=\mm J^\flat_{\mm r_\vr}(c_\vr)\\
					\mm{Mat}({\na}|_{E_{\vr}},\bb e_\vr)=\mm{diag}(\mm J^\flat_{\mm r_{\mm \vr1}}(c_\vr+f_{\mm \vr 1}),...,\mm J^\flat_{\mm r_{\mm \vr \ell}}(c_\vr+f_{\mm \vr \ell})).
				\end{cases}
				\]
			\end{itemize}  
		\end{itemize}
	\end{thm}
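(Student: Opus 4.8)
The plan is to reduce everything to the non-equivariant logarithmic decomposition of Theorem \ref{section2--12} and then verify that the objects it produces are automatically compatible with the $\La$-action. Forgetting $\al:\La\to\mm{End}((E,\na))$, Theorem \ref{section2--12} furnishes a unique decomposition $\na=\na^{\rm rs}+P_\na$ in $\mc(\lau Cx/C)$ satisfying (i), (ii), (iii) together with the explicit normal forms in (iii)(a),(b). Since those normal forms and the semisimplicity of $P_\na$ concern only the underlying $\lau Cx$-structure, they carry over verbatim. Hence the sole additional content here is that $\na^{\rm rs}$ is again horizontal for the $\La$-action, so that $(E,\na^{\rm rs})\in\mcrs(\lau Cx/C)_{(\La)}$, and that $P_\na$ is $\La$-linear, i.e. $P_\na\in\mm{End}_{\lau \La x}(E)$.

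First I would observe that each $\al(\la)$, being an endomorphism of $(E,\na)$, commutes with $\na$. Extending it $\lau C{x_s}$-linearly to $E_{(s)}$ and writing $\al(\la)$ again for the extension, it still commutes with $\na$ and, being already defined on $E$, commutes with the generator $\si$ of $\mm G=\mm{Gal}(\lau C{x_s}/\lau Cx)$. By Lemma \ref{section2--7} the isotypical component $E_{\ph}$ is spanned by $\ker_{\laus Cx}(\na-f_\ph)^{q}$ for suitable $q$; since $\al(\la)$ is $\lau C{x_s}$-linear and commutes with $\na$, it commutes with $\na-f_\ph\id$ and therefore preserves this kernel. Thus each $E_{\ph}$ is stable under $\al(\la)$, which is exactly the $\La$-compatibility of the TLJ-decomposition recorded in Proposition \ref{section3--2}.

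Next I would invoke the explicit shape of $P_\na$ built in the proof of Theorem \ref{section2--12}: on each $E_{\ph}$ one has $P_\na|_{E_{\ph}}=\mm p(\ph)\id_{E_{\ph}}$, scalar multiplication by the principal part. As $\al(\la)$ preserves $E_{\ph}$ and acts $\lau C{x_s}$-linearly there, it trivially commutes with the scalar $\mm p(\ph)\id_{E_{\ph}}$; summing over $\ph\in\Ph$ shows $\al(\la)$ commutes with $\bigoplus_{\ph}P_\na|_{E_{\ph}}$ on $E_{(s)}$, and restricting to the Galois invariants $E=(E_{(s)})^{\mm G}$ yields that $\al(\la)$ commutes with $P_\na$ on $E$. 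Hence $P_\na$ is $\La$-linear and $\na^{\rm rs}=\na-P_\na$ commutes with every $\al(\la)$, giving (i) and (ii) in the equivariant sense. Uniqueness up to isomorphism is inherited for free: any decomposition satisfying (i)--(iii) is in particular a logarithmic decomposition in $\mc(\lau Cx/C)$, so it agrees with $\na=\na^{\rm rs}+P_\na$ by the uniqueness clause of Theorem \ref{section2--12}. The only delicate point is keeping the descent honest, namely checking that the $\lau C{x_s}$-linear extension of $\al(\la)$ commutes with both $\na$ and $\si$ simultaneously so that the conclusion really returns from $E_{(s)}$ to $E$; beyond this bookkeeping no new analytic input is required.
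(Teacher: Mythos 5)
Your proposal is correct and follows essentially the same route as the paper: reduce to the non-equivariant logarithmic decomposition of Theorem \ref{section2--12}, then use the $\La$-stability of the isotypical components $E_{\ph}$ (the content of Proposition \ref{section3--2}) to conclude that $\na^{\rm rs}$ and $P_\na$ are compatible with the $\La$-action. You spell out the $\La$-linearity of $P_\na$ via its scalar form $\mm p(\ph)\id_{E_{\ph}}$ on each component, a detail the paper leaves implicit, but the argument is the same.
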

	
	\begin{proof} According to Theorem \ref{section2--12}, we need only prove that $(E,\na^{\rm rs})$ is an object in $\mcrs(\lau Cx/C)_{(\La)}$. By Proposition \ref{section3--2}, each $(E_{\ph},\na_{\ph})$ is equipped with an action of $\La$.  Hence, we obtain that $(E,\na^{\rm rs})\in\mcrs(\lau Cx/C)_{(\La)}.$ 
	\end{proof}

	
	\begin{thm}\label{section3--4}
		Let $(E,\na)\in \mc(\lau Cx/C)_{(\La)}$ be given and let $\ce$ be the Deligne-Manin lattice of $(E,\nabla^{\mm rs})$ with respect to $\tau$. For each $\ph\in \Ph$, let $\na_{\ph}^{\rm rs}=\na^{\rm rs}|_{E_{\ph}}$. Then
		\begin{itemize}
			\item[(i)] $(\ce,\nabla^{\mm rs})$ is closed under the action of $\La$.
			\item[(ii)]  There exists a decomposition $\ce_{(s)}=\bigoplus\limits_{\ph \in \Ph} \ce_{\ph}$ such that $\ce_{\ph}\in \mc_{\mm{log,s}}(\laus Cx/C)_{\La}$ is a Deligne-Manin lattice for $(E_{\ph},\na_{\ph}^{\rm rs})$ having all exponents in $\tau$.
		\end{itemize} 
	\end{thm}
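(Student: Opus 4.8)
\textit{The plan.} The only genuinely new content beyond Proposition \ref{section2--14} is the compatibility of the Deligne-Manin lattice, and of its isotypical decomposition, with the action of $\La$; the bare decomposition $\ce_{(s)}=\bigoplus_{\ph\in\Ph}\ce_\ph$ into Deligne-Manin lattices of the components $(E_\ph,\na_\ph^{\rm rs})$ is already supplied by Proposition \ref{section2--14}. My starting point is Theorem \ref{section3--3}, which furnishes the logarithmic decomposition $\na=\na^{\rm rs}+P_\na$ together with its $\La$-compatibility. Since $P_\na\in\mm{End}_{\lau\La x}(E)$ commutes with every $\al(\la)$, and $\al(\la)$ is horizontal for $\na$, the difference $\na^{\rm rs}=\na-P_\na$ also commutes with $\al(\la)$. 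Hence each $\al(\la)$ is a horizontal endomorphism of $(E,\na^{\rm rs})$, i.e. $(E,\na^{\rm rs})\in\mcrs(\lau Cx/C)_{(\La)}$.

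For part (i) I would invoke the functoriality (canonicity) of the Deligne-Manin lattice. Restricted to Deligne-Manin models with exponents in $\tau$, the functor $\ga$ is an equivalence onto $\mcrs(\lau Cx/C)$: this is the substance of \cite[Theorem 6.8]{HdST22}, already used in the proof of Proposition \ref{section2--14}, and it is exactly the mechanism by which the Galois automorphism $\si$ was transported to the lattice in Theorem \ref{section2--12} via \cite[Theorem 7.8]{HdST22}. In particular $\ga$ induces an isomorphism of $C$-algebras between the horizontal endomorphisms of $\ce$ in $\mclog(\pos Cx/C)$ and those of $(E,\na^{\rm rs})$ in $\mcrs(\lau Cx/C)$. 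Composing the inverse with $\al:\La\to\mm{End}(E,\na^{\rm rs})$ yields a $C$-algebra map $\tilde\al:\La\to\mm{End}(\ce)$ with $\ga(\tilde\al(\la))=\al(\la)$. As $\ga$ is extension of scalars along $\pos Cx\hookrightarrow\lau Cx$, the endomorphism $\al(\la)$ restricts on $\ce$ to $\tilde\al(\la)$; thus $\al(\la)(\ce)\subseteq\ce$ for every $\la\in\La$, which is precisely the assertion that $(\ce,\na^{\rm rs})$ is closed under $\La$.

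For part (ii) I would combine (i) with Propositions \ref{section2--14} and \ref{section3--2}. By Proposition \ref{section2--14} one has $\ce_{(s)}=\bigoplus_{\ph\in\Ph}\ce_\ph$, where each $\ce_\ph\subseteq E_\ph$ is a Deligne-Manin lattice of $(E_\ph,\na_\ph^{\rm rs})$ with all exponents in $\tau$. Part (i), extended $\pos C{x_s}$-linearly to $\ce_{(s)}=\pos C{x_s}\otu{\pos Cx}\ce$, shows that $\La$ preserves $\ce_{(s)}$, while Proposition \ref{section3--2} shows that the TLJ-decomposition $E_{(s)}=\bigoplus_\ph E_\ph$ is $\La$-stable. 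Now $\ce_\ph=\ce_{(s)}\cap E_\ph$: indeed any $v\in\ce_{(s)}\cap E_\ph$ decomposes as $v=\sum_\psi v_\psi$ with $v_\psi\in\ce_\psi\subseteq E_\psi$, and membership of $v$ in $E_\ph$ forces $v_\psi=0$ for $\psi\neq\ph$, so $v\in\ce_\ph$. Being the intersection of two $\La$-stable submodules, each $\ce_\ph$ is $\La$-stable, whence $\ce_\ph\in\mc_{\mm{log,s}}(\laus Cx/C)_\La$, as required.

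The main obstacle is step (i): the whole argument rests on the functoriality of the Deligne-Manin lattice, namely that an arbitrary (not necessarily invertible) horizontal endomorphism of a regular-singular connection lifts uniquely to the canonical logarithmic lattice. This is what lets the non-invertible elements of $\La$ (for instance those in $\g l$) act on $\ce$ at all. Once this full-faithfulness of $\ga$ on Deligne-Manin models is granted, the $\La$-action transports automatically and part (ii) is a purely formal consequence of the $\La$-compatibilities already recorded in Sections \ref{section2--0} and \ref{section3--0}.
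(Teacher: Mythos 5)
Your proposal is correct and follows essentially the same route as the paper: the key step in both is the full faithfulness of the lattice functor on Deligne--Manin models (the bijectivity of $\mm{End}_{\mclog}(\ce,\na^{\rm rs})\to\mm{End}_{\mcrs}(E,\na^{\rm rs})$ from \cite[Theorem 7.8]{HdST22}), which transports the $\La$-action to $\ce$ and gives (i). Your additional verification for (ii) that each $\ce_\ph=\ce_{(s)}\cap E_\ph$ is $\La$-stable merely makes explicit what the paper leaves implicit via Propositions \ref{section2--14} and \ref{section3--2}.
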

	\begin{proof} 
		According to Theorems \ref{section3--3}, we need only prove $(\ce,\nabla^{\mm rs})\in\mclog(\pos Cx/C)_{(\La)}$. Using \cite[Theorem 7.8]{HdST22}, the morphism $$\mm{End}_{\mclog}( \ce,\na^{\rm rs})\aro \mm{End}_{\mcrs}(E,\na^{\rm rs})$$
		is bijective. By using \cite[Theorem 7.8-(2)]{HdST22}, $\ce$ is closed under the action of $\La$ on $(E,\na^{\rm rs})$, and therefore, $(\ce,\na^{\rm rs})\in \mclog(\pos C{x}/C)_{(\La)}$. 
	\end{proof}

	\section{Logarithmic decomposition for connections parameterized by a complete discrete valuation ring}\label{section4--0}
	
	We recall here the notation $\mc^\circ$ stands for the full subcategory of $\mc$ containing $R$-torsion free objects. For each $s\in\NN^*$, we get 
	\begin{equation*}\label{eq.20221213}
		{\rm dLog}_s= \left\{\sum_{n\ge0}a_nx_s^n\in\pos R{x_s}:\,a_0\in s^{-1}\ZZ\right\}\simeq s^{-1}\ZZ+x_s\poss Rx
	\end{equation*}
	and $\mathds X_s$ is the group of isomorphism classes of objects of rank one in $\mc^{\mm{o}}_s\big(\laus Rx/R\big)$.

	For each $f\in\lau R{x_s}$, let $\g L_f=(\lau R{x_s} \bb f,\mm d_f)$ be the connection of rank one over $\lau R{x_s}$ defined by $\mm d_f(\bb f)=f\bb f$.  
	Note that $\g L_f$ is isomorphic to $\g L_g$ if and only if $f-g\in {\rm dLog}_s$, so the group $\mathds X_s$ is isomorphic to $R[x_s^{-1}]/s^{-1}\ZZ$. For each $\ph\in R[x_s^{-1}]/s^{-1}\ZZ$, denote by $\mm p(\ph)=\mm p(f_\ph)$, where $f_\ph$ is a representative in $\laus Rx$ of $\ph$.

	Let $d$ be a positive integer, $R'\supseteq R$ be an extension of $R$, $f\in \laus {R'}x$ and $(E,\na)$ be an object in $\mc\big(\lau Rx/R\big)$. Denote by $E_{\laus {R'}x}=E\otu{\lau Rx}\laus {R'}x$ and 
	$$\ker_{\laus {R'}x}\big(\na-f\big)^d:=\big\{e\in E_{\laus {R'}x}:\big(\na-f\big)^de=0\big\}.$$
	\subsection{The logarithmic decomposition of a connection}

	Let $(E,\na)\in\mc^{\mm{o}}\big(\lau Rx/R\big)$ and $s$ be a positive integer. Let us consider the following notions. 
	\begin{dfn}\label{04.05.2022--00}
		We say that $(E,\na)$ admits a \textit{TLJ-decomposition} over $\lau R{x_s}$ if there exists a decomposition 
		\begin{equation}\label{TLJ-decomp.2402}
			(E_{(s)},\na)=\bigoplus\limits_{\ph\in \Ph} (E_{\ph},\na_{\ph}),
		\end{equation}
		where $\Ph\subset R[x_s^{-1}]/s^{-1}\ZZ$ is a finite set and $(E_{\ph},\na_{\ph})\subset (E_{(s)},\na)$ is the subconnection generated by $\ker_{\laus Rx} (\na-f_\ph)^{q_\ph}$ for some $f_\ph\in \lau R{x_s}$ and $q_\ph\in \NN$ enough large. 
		\begin{itemize}
			\item Each element of $\Ph$ is called a \textit{character} of $(E,\na)$.
			\item The smallest number $s\in\NN$ such that \eqref{TLJ-decomp.2402} exists is called \textit{the Turrittin index} of $(E,\na)$. In addition, $(E,\na)$ is \textit{unramified} if $s=1$.
		\end{itemize}		
	\end{dfn}

	For the article's aim, we introduce the following notion:
	\begin{dfn}\label{04.05.2022--01} We say that the connection $(E,\na)$ admits a \textit{Turrittin-Levelt-Jordan form} (TLJ-form) over $\lau R{x_s}$  if, for each $\ph\in \Ph$ is a character of $(E,\na)$, there exist an $\lau R{x_s}$--basis $(\bb e_\ph)$ of 	$E_{\ph}$ such that   $$\mm{Mat}(\na_{\ph}, \bb e_\ph)=\mm J^\flat_{\mm r_\ph}(f_\ph),$$ where $f_\ph$ is a representative in $\laus Rx$ of $\ph$. 
	\end{dfn}

	\begin{rmk}
		Let $(E,\na)\in \mc\big(\lau Cx/C\big)$ and $s\in \NN$ be its Turrittin index. By Remark \ref{TLJ-form-2402}, $(E,\na)$ has a TLJ-form over $\laus Cx$. However, this property is not true for connections over $\lau Rx$.
	\end{rmk}

	\begin{ex}
		Consider the connection $(E,\na)$ defined by $E=\lau Rx\mm e_1\op\lau Rx\mm e_2$ and  $\na(\mm e_1)=0; \na(\mm e_2)=t\mm e_1$. Because $\na(\mm e_1)=\na^2(\mm e_2)=0$, so $(E,\na)$ is generated by $\ker_{\lau Rx} \na^{2}$. Notice that $(E,\na)$ has no TLJ-form over $\lau Rx$, but $\big(E_{\lau Kx},\na\big)$ admits a TLJ-form over $\lau Kx$ because
		$\mm{Mat}(\na, \bb e)=\begin{pmatrix}
			0&1\\
			0&0
		\end{pmatrix}$ with $\bb e=\{\mm e_1, \mm e_1+\dfrac{1}{t}\mm e_2\}$.
	\end{ex}

	\begin{ex}
		Consider the connection $(E,\na)$ defined by $E=\lau Rx\mm e_1\op\lau Rx\mm e_2$ and $\na(\mm e_1)=\mm e_2; \na(\mm e_2)=t^2\mm e_1$. Because $(E,\na)|_{t=0}$ is an indecomposable connection over $\lau Cx$, so $(E,\na)$ has no TLJ-form over $\lau Rx$. However, $\big(E_{\lau Kx},\na\big)$ is diagonalizable over $\lau Kx$ and $\mm{Mat}(\na, \bb e)=\begin{pmatrix}
			t&0\\
			0&-t
		\end{pmatrix}$ with $\bb e=\{t\mm e_1+\mm e_2, -t\mm e_1+\mm e_2\}$.
	\end{ex}

	\begin{ex}
		Let $(E,\na)$ be  the connection defined by $E=\lau Rx\mm e_1\op\lau Rx\mm e_2$ and $\na(\mm e_1)=\mm e_1; \na(\mm e_2)=t\mm e_1+(t+\dfrac{1}{2})\mm e_2$. By computing, we obtain that 
		\[(E,\na)=(E_1,\na_1)\oplus (E_{t+\frac{1}{2}},\na_{t+\frac{1}{2}}),\]
		where $(E_1,\na_1)$ and $(E_{t+\frac{1}{2}},\na_{t+\frac{1}{2}})$
		are the connections generated by $\ker_{\lau Rx} (\na-1)$ and $\ker_{\lau Rx} (\na-t-\dfrac{1}{2})$, respectively. More specifically, it admits a TLJ-form over $\lau Rx$ because
		$\mm{Mat}(\na, \bb e)=\begin{pmatrix}
			1&0\\
			0&t+\dfrac{1}{2}
		\end{pmatrix}$ with $\bb e=\{\mm e_1, 2t\mm e_1+\mm e_2\}$.
	\end{ex}

	\begin{prp} \label{07052022--02}
		Let $(E,\na)\in\mc^{\mm{o}}\big(\lau Rx/R\big)$ admit a TLJ-form over $\laus Rx$. Let us fix $\ph\in \Ph$ a character of $(E,\na)$. Then, there is a representative $h_\ph\in R[x_s^{-1}]$ of $\ph$ and an $\lau R{x_s}$-basis $(\bb e'_\ph)$ of $E_{\ph}$ such that $$\mm{Mat}(\na_{\ph}, \bb e'_\ph)=\mm J^\flat_{\mm r_\ph}(h_\ph).$$ 
	\end{prp}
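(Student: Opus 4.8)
The plan is to leave the module $E_\ph$ and the subconnection $\na_\ph$ untouched and to modify only the given TLJ-basis $\bb e_\ph$ by a \emph{scalar} gauge transformation. Conjugating $\mm J^\flat_{\mm r_\ph}(f_\ph)$ by a scalar matrix $u\,\mm I_{\mm r_\ph}$ with $u\in\pos R{x_s}^\times$ fixes that matrix, so rescaling every vector of $\bb e_\ph$ by one and the same unit $u$ leaves the nilpotent $\{0,1\}$--subdiagonal intact and merely adds the logarithmic derivative $u^{-1}\vt(u)$ to the diagonal. Hence the whole statement reduces to two tasks: (1) exhibit a representative $h_\ph\in R[x_s^{-1}]$ of $\ph$, and (2) solve the equation $u^{-1}\vt(u)=h_\ph-f_\ph$ for a unit $u$.

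For task (1), I would write the Laurent series as $f_\ph=\mm p(f_\ph)+f_0+\sum_{i\ge1}f_ix_s^i$, where $\mm p(f_\ph)=\sum_{k<0}f_kx_s^k\in x_s^{-1}R[x_s^{-1}]$ is the (finite) principal part and $f_0\in R$ is the constant term, and set
\[
h_\ph:=\mm p(f_\ph)+f_0\in R[x_s^{-1}].
\]
Then $h_\ph-f_\ph=-\sum_{i\ge1}f_ix_s^i$ lies in $x_s\pos R{x_s}$, hence in ${\rm dLog}_s$ because its constant term is $0\in s^{-1}\ZZ$; so $h_\ph$ and $f_\ph$ define the same class, i.e.\ $h_\ph$ is a representative of $\ph$ in $R[x_s^{-1}]$.

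For task (2), put $w:=h_\ph-f_\ph=\sum_{i\ge1}w_ix_s^i\in x_s\pos R{x_s}$. Since $\vt=s^{-1}x_s\frac{d}{dx_s}$ acts on $\pos R{x_s}$ by $\vt(x_s^i)=(i/s)x_s^i$, the equation $\vt(v)=w$ is solved coefficientwise by $v=\sum_{i\ge1}(sw_i/i)x_s^i$; this is legitimate because $R\supseteq\QQ$ makes every $i\in\NNo$ invertible. As $\pos R{x_s}$ is $x_s$--adically complete and $v\in x_s\pos R{x_s}$, the element $u:=\exp(v)$ is a well-defined unit in $1+x_s\pos R{x_s}$, and since $\vt$ is a derivation one gets $u^{-1}\vt(u)=\vt(v)=w=h_\ph-f_\ph$.

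Finally, let $\bb e'_\ph$ be obtained from $\bb e_\ph$ by rescaling each basis vector by $u$; it is again an $\lau R{x_s}$--basis of $E_\ph$ because $u$ is a unit. With transition matrix $P=u\,\mm I_{\mm r_\ph}$ the base-change formula yields
\[
\mm{Mat}(\na_\ph,\bb e'_\ph)=P^{-1}\mm J^\flat_{\mm r_\ph}(f_\ph)P+P^{-1}\vt(P)=\mm J^\flat_{\mm r_\ph}(f_\ph)+(h_\ph-f_\ph)\mm I_{\mm r_\ph}=\mm J^\flat_{\mm r_\ph}(h_\ph),
\]
using that $P$ is scalar (so it commutes with $\mm J^\flat_{\mm r_\ph}(f_\ph)$) and that adding $(h_\ph-f_\ph)\mm I_{\mm r_\ph}$ only shifts the diagonal while preserving the subdiagonal. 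The only real content is the existence of the gauge unit $u$ in task (2): this is where characteristic zero is indispensable, since one must divide by the positive integers $i$, and it is the step I would single out as the crux even though it is elementary here.
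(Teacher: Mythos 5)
Your proposal is correct and follows essentially the same route as the paper: the paper also keeps $E_\ph$ fixed, sets $h_\ph$ equal to the non-positive-degree part of $f_\ph$, and rescales the TLJ-basis by a single unit $u$ with $u^{-1}\vt(u)=-\sum_{i\ge1}a_{\ph i}x_s^i$, so that the base-change formula shifts only the diagonal of $\mm J^\flat_{\mm r_\ph}(f_\ph)$. Your explicit construction of $u=\exp(v)$ by coefficientwise integration simply fills in the step the paper dismisses with ``by computing.''
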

	\begin{proof}
		Let us write $f_\ph=\sum\limits_{j\le0}a_{\ph j}x_s^j+\sum\limits_{i\ge1}a_{\ph i}x_s^i$ with all coefficients $a_{\ph i}\in R$. By computing, there exists $u\in\lau R{x_s}^\times$ such that $\vt(u)u^{-1}=-\sum\limits_{i\ge1}a_{\ph i}x_s^i$. The matrix of $\na_\ph$ with respect to $u\bb e_\ph$ is 
		$$u^{-1}\vt(u)\mm I_{\mm r_\ph}+\mm J^\flat_{\mm r_\ph}(f_\ph)=\mm J^\flat_{\mm r_\ph}(h_\ph)$$
		which belongs to $\mm{M}_{\mm r_\ph}\big(R[x_s^{-1}]\big)$, where $h_\ph=\sum\limits_{j\le0}a_{\ph j}x_s^j$.
	\end{proof}

	\begin{dfn} Let $(E,\na)\in\mc^{\mm{o}}\big(\lau Rx/R\big)$ admit a TLJ-decomposition over $\lau R{x_s}$ with its characters $\{\ph_1,...,\ph_n\}\subset R[x_s^{-1}]/s^{-1}\ZZ$. Let $f_{\ph_1},...,f_{\ph_n}$ be representatives in $R[x_s^{-1}]$ of $\ph_1,...,\ph_n$, respectively. 	We say that $(E,\na)$ has no \textit{turning point} at $\g r$ if $f_{\ph_1},...,f_{\ph_n}$ and their differences are invertible in $\laus {R}x$.	
	\end{dfn}

	According to  \cite[Theorrem 17.5.3, p.119]{andre-baldassarri-cailotto20}, we obtain the following proposition on the existence of TLJ-forms for a class of connections over $\lau Rx$. 
	\begin{prp}
		Let $(E,\na)\in\mc^{\mm{o}}\big(\lau Rx/R\big)$ be a connection of rank $n$ over $\lau Rx$ that has $n$ distinct characters in $R[x^{-1}]/\ZZ$. Assume that $(E,\na)$ has no {turning point} at $\g r$. Then $(E,\na)$ admits a TLJ-form over $\lau R{x}$.
	\end{prp}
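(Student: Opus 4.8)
The plan is to reduce to the unramified case $s=1$ (the characters lie in $R[x^{-1}]/\ZZ$, so no root of $x$ is needed) and to produce over $\lau Rx$ a decomposition
\[
(E,\na)=\bigoplus_{i=1}^{n}(E_{\ph_i},\na_{\ph_i})
\]
into rank-one isotypical components, one for each of the $n$ distinct characters $\ph_1,\dots,\ph_n$. Fixing representatives $f_{\ph_i}\in R[x^{-1}]$, the no-turning-point hypothesis says that each $f_{\ph_i}$ and each difference $f_{\ph_i}-f_{\ph_j}$ is invertible in $\lau Rx$; this is precisely the good-formal-structure condition along $\g r$ required by the relative Turrittin-Levelt-Hukuhara theorem \cite[Theorem 17.5.3]{andre-baldassarri-cailotto20}, which I would invoke to obtain the displayed splitting with $(E_{\ph_i},\na_{\ph_i})$ generated by $\ker_{\lau Rx}(\na-f_{\ph_i})^{q_i}$ for $q_i$ large enough. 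Since $E$ has rank $n$ over $\lau Rx$, equal to the number of (nonzero) characters, each $E_{\ph_i}$ then has rank one.

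Next I would upgrade this decomposition to a TLJ-form. As $(E,\na)\in\mc^{\mm{o}}(\lau Rx/R)$ carries no $R$-torsion, Theorem \ref{10.12.2021--2} shows that $E$ is free over the principal ideal domain $\lau Rx$, so each direct summand $E_{\ph_i}$ is projective, hence free of rank one; I choose a generator $e_i$ and write $\na(e_i)=g_i e_i$ with $g_i\in\lau Rx$. By construction $\g L_{g_i}\simeq(E_{\ph_i},\na_{\ph_i})$, so $g_i$ represents the class $\ph_i$ in $\XX_1\simeq R[x^{-1}]/\ZZ$ and $\mm{Mat}(\na_{\ph_i},\{e_i\})=\mm J^\flat_1(g_i)$. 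Taking $\bb e=\{e_1,\dots,e_n\}$ yields
\[
\mm{Mat}(\na,\bb e)=\mm{diag}\big(\mm J^\flat_1(g_1),\dots,\mm J^\flat_1(g_n)\big),
\]
which is the required TLJ-form over $\lau Rx$; if a polynomial representative is wanted, Proposition \ref{07052022--02} lets me replace each $g_i$ by some $h_{\ph_i}\in R[x^{-1}]$.

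The main obstacle is the first step: producing the splitting over $\lau Rx$ rather than merely over the fraction field $\lau Kx$, where it is automatic because $\lau Kx$ is a field and the classical Levelt-Turrittin theory of Section \ref{section2--0} applies verbatim. If I were to reprove \cite[Theorem 17.5.3]{andre-baldassarri-cailotto20} in the present notation, I would follow the paper's own device: reduce modulo $\g r^{\ell+1}$, lift the isotypical splitting from $R_\ell$ to $R_{\ell+1}$, and pass to the limit using the completeness of $R$. At each level the off-diagonal obstruction to splitting is measured by the equation $\vt(u)+(f_{\ph_i}-f_{\ph_j})\,u=h$ on the graded piece $\g r^{\ell+1}/\g r^{\ell+2}\simeq C$; invertibility of $f_{\ph_i}-f_{\ph_j}$ makes $\vt+(f_{\ph_i}-f_{\ph_j})$ bijective on $\lau Cx$ when the difference has a pole, while distinctness of the characters modulo $\ZZ$ excludes the integer resonances obstructing the purely regular-singular differences. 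The delicate point is exactly that these solvability conditions hold uniformly in $\ell$, which is what the no-turning-point hypothesis secures.
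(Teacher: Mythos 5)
Your proposal is correct and follows essentially the same route as the paper: both arguments obtain the rank-one isotypical splitting over $\lau Rx$ by combining Levelt's diagonalization over the fraction field with the descent criterion of \cite[Theorem 17.5.3]{andre-baldassarri-cailotto20} (which is exactly where the no-turning-point hypothesis enters), observe that $n$ distinct characters in rank $n$ force each summand to be free of rank one, and read off the diagonal TLJ-form from a basis of eigenvectors. The extra sketch of how one would reprove the descent theorem by lifting modulo $\g r^{\ell+1}$ is consistent with the paper's general strategy but is not needed here.
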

	
	\begin{proof} Let $\ph_1,...,\ph_n$ be the distinct characters in $R[x^{-1}]/\ZZ$ of $(E,\na)$ and $f_1,...,f_n$ be their representatives in $R[x^{-1}]$, respectively. By using \cite[Section 6.a]{Lev75}, the connection $(E_{\lau Kx},\na)$ is diagonalizable in $\mc\big(\lau Kx/K\big)$. Hence, it has a TLJ-decomposition
		\begin{equation}\label{descend-TLJ form-1}
			\big(E_{\lau Kx},\na\big)=\bigoplus_{i=1}^n(E_{\ph_i},\na_{\ph_i}),
		\end{equation}
	where the isotypical component $(E_{\ph_i},\na_{\ph_i})$ is generated by $\ker_{\lau Kx}(\na-f_i)^{n_i}$ for each $1\le i\le n$. Because $\dim_{\lau Kx}E_{\lau Kx}=n$, 
	each $(E_{\ph_i},\na_{\ph_i})$ is a connection of rank one over $\lau Kx$, hence it is generated by $\ker_{\lau Kx}(\na-f_i)$. 
	
	The connection $(E,\na)$ satisfies the descent condition \cite[Theorem 17.5.3]{andre-baldassarri-cailotto20}, so the TLJ-decomposition \eqref{descend-TLJ form-1} descends to a decomposition of $(E,\na)$:
	\begin{equation*}
		(E,\na)=\bigoplus\limits_{i=1}^n \lau Rx\ot\ker_{\lau Rx}(\na-f_i).
	\end{equation*}
	Consequently, there is an $\lau Rx$-basis $\bb e=\big\{e_1,...,e_n\big\}$ with $e_i\in \ker_{\lau Rx}(\na-f_i)$ of $E$ for each $1\le i\le n$. Hence, the matrix of $\na$ with respect to $\bb e$ is $\mm{diag}(f_1,...,f_n)$.
	\end{proof}

Let $(E,\na)\in \mcrs^{\mm{o}}(\lau Rx/R)$. According to \cite[Theorem 8.16]{HdST22}, there exists an object $(V,A)\in \bb{End}_R^{\mm{o}}$ such that $(E,\na)$ is isomorphic to the Euler connection $\ga\mm{eul}(V,A)$, where all eigenvalues of the reduction of $A$ modulo $\g r$ belong to $\tau$. 
By combining \cite[Section 4.2, Proposition]{humphreys72} and \cite[Theorem 3.3]{ene06} together, we arrive at
\begin{prp}
	Let $(E,\na)\in \mcrs^{\mm{o}}(\lau Rx/R)$ be defined by $(V,A)\in \bb{End}_R^{\mm{o}}$. Assume that all eigenvalues of $A$ and their differences are invertible in $R$. Then $(E,\na)$ admits a TLJ-decomposition over $\lau Rx$.
\end{prp}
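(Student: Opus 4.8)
The plan is to split the pair $(V,A)$ into generalized eigenspaces over $R$ and to transport this splitting through the Euler functor. Since $(E,\na)\simeq\ga\mm{eul}(V,A)$ and the construction $\mm{eul}(-)$ is additive in its $R$-module argument (if $V=V'\oplus V''$ with $A=A'\oplus A''$ then $\pos Rx\otu RV$ and $D_A$ split accordingly, and $\ga$ preserves direct sums), any $A$-stable decomposition $V=\bigoplus_iV_i$ induces a decomposition $(E,\na)=\bigoplus_i(E_i,\na_i)$ in $\mc(\lau Rx/R)$ with $E_i=\lau Rx\otu RV_i$ and $\na_i=\na|_{E_i}$. Thus the whole task reduces to producing an $R$-linear, $A$-invariant direct-sum decomposition of $V$ whose summands carry a single eigenvalue.

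First I would organise the eigenvalues. As $V$ is $R$-torsion free and finite over the discrete valuation ring $R$ it is free, so $A\in\mm{End}_R(V)$ has a characteristic polynomial $\chi_A\in R[X]$; by hypothesis its roots lie in $R$, and I write $\vr_1,\dots,\vr_k\in R$ for the distinct ones with multiplicities $m_i$, so that $\chi_A=\prod_i(X-\vr_i)^{m_i}$. The assumption that each difference $\vr_i-\vr_j$ (for $i\ne j$) is a unit of $R$ makes the factors $(X-\vr_i)^{m_i}$ pairwise comaximal in $R[X]$: the ideal $(X-\vr_i,\,X-\vr_j)$ contains the unit $\vr_j-\vr_i$ and so equals $R[X]$, and comaximality passes to powers. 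This is exactly the input under which the generalized eigenspace decomposition \cite[Section 4.2, Proposition]{humphreys72} remains valid over the ring $R$ \cite[Theorem 3.3]{ene06}, yielding
\[
V=\bigoplus_{i=1}^kV_i,\qquad V_i=\ker\big((A-\vr_i\,\id_V)^{m_i}\big),
\]
with each $V_i$ a free, $A$-stable $R$-summand on which $A-\vr_i$ is nilpotent.

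Next I would recognise the summands as isotypical components and read off their characters. Writing $\na=D_A$, one computes $(\na-\vr_i)(1\ot v)=1\ot(A-\vr_i)v$, hence $(\na-\vr_i)^{q}(1\ot v)=1\ot(A-\vr_i)^{q}v$ for $v\in V_i$; choosing $q_i$ with $(A-\vr_i)^{q_i}|_{V_i}=0$ gives $1\ot V_i\subset\ker_{\lau Rx}(\na-\vr_i)^{q_i}$. Since $1\ot V_i$ generates $E_i$ over $\lau Rx$, while on every other summand $E_j$ ($j\ne i$) the residue of $\na-\vr_i$ equals $A-\vr_i$, a unit plus a nilpotent, hence invertible with exponent $\vr_j-\vr_i\notin\ZZ$, so $\ker_{\lau Rx}(\na-\vr_i)^{q_i}$ meets $E_j$ trivially, the subconnection generated by $\ker_{\lau Rx}(\na-\vr_i)^{q_i}$ is precisely $(E_i,\na_i)$, with representative $f_{\ph_i}=\vr_i$. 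Finally the characters $\ph_i=[\vr_i]\in R[x^{-1}]/\ZZ$ are pairwise distinct: invertibility of $\vr_i-\vr_j$ forces $\vr_i-\vr_j\notin\g r$, so the reductions $\ov{\vr_i}\in\tau$ are distinct, and distinct elements of the system of representatives $\tau$ lie in distinct classes of $C/\ZZ$, whence $\vr_i-\vr_j\notin\ZZ$. Therefore $\Ph=\{\ph_1,\dots,\ph_k\}$ is a finite subset of $R[x^{-1}]/\ZZ$ and $(E,\na)=\bigoplus_{i=1}^k(E_{\ph_i},\na_{\ph_i})$ is a TLJ-decomposition over $\lau Rx$; in particular the Turrittin index equals $1$.

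The step I expect to be the main obstacle is the passage from the classical field-theoretic generalized eigenspace decomposition to an honest direct-sum decomposition over the ring $R$. The comaximality computation is the crux: it is precisely the invertibility of the eigenvalue differences that permits the Chinese Remainder argument over $R[X]$ — equivalently, that allows the idempotent decomposition of $\ov A$ over $C$ to be lifted through the completeness of $R$. Once the $R$-linear splitting of $(V,A)$ is secured, transporting it through $\ga\mm{eul}$ and identifying the characters are purely formal.
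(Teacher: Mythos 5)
Your proposal is correct and follows essentially the same route as the paper: both use the invertibility of the eigenvalue differences to make the factors of the characteristic polynomial pairwise comaximal in $R[T]$, apply the Chinese Remainder argument to obtain the generalized eigenspace splitting $V=\bigoplus_i\ker(A-\vr_i)^{m_i}$ over $R$, and transport this through the Euler construction. The only divergence is cosmetic: the paper goes on to build $A_s=p(A)$, $A_n=A-A_s$ and invokes \cite[Theorem 3.3]{ene06} for an explicit triangular matrix form, whereas you verify the definition of a TLJ-decomposition directly by identifying each summand with the subconnection generated by $\ker_{\lau Rx}(\na-\vr_i)^{q_i}$ and checking the characters are distinct modulo $\ZZ$ — a detail the paper leaves implicit.
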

\begin{proof} Let $\la_1,...,\la_m$ be the eigenvalues in $R$ of $A$ with multiplicities $q_1,...,q_m$. The characteristic polynomial of $A$ is $\prod\limits_{i=1}^m (T-\la_i)^{q_i}.$ Since $\big\{\la_i-\la_j,\la_k\big\}_{1\le i,j,k\le m;i\ne j}$ are invertible in $R$, so the polynomials $(T-\la_i)^{q_i}, (T-\la_j)^{q_j}$ and $T$ are pairwise coprime for all $1\le i,j\le m$ and $i\ne j$. Now apply the Chinese Remainder Theorem, there exists a unique polynomial $p(T)\in R[T]$ such that $p(T)=\la_i\quad (\mm{mod}\quad (T-\la_i)^{q_i})$ and $p(T)\equiv 0\quad (\mm{mod}\quad T)$		for all $1\le i\le m$. Set $A_s=p(A)$ and $A_n=A-A_s$, we have $[A_s,A_n]=0$. Because the elements $\big\{\la_i-\la_j\big\}_{1\le i,j\le m;i\ne j}$ are invertible in $R$, so we obtain that $1=\sum\limits_{i=1}^{m}\prod\limits_{j\ne i}\big(\frac{T-\la_j}{\la_i-\la_j}\big)^{q_i}$. 
Let $V_i=\ker (A-\la_i)^{q_i}$ for each $1\le i\le m$, we get $V=\bigoplus\limits_{i=1}^m V_i$. The congruence $p(T)=\la_i\quad (\mm{mod}\quad (T-\la_i)^{q_i})$ implies that $A_s|_{V_i}=\la_i\id_{V_i}$ for all $i$. By definition, each $A_n|_{V_i}$ is nilpotent, then so is $A_n$.

Finally, according to \cite[Theorem 3.3, p.79]{ene06}, there exists an $R$--basis $\bb v$ of $V$ such that $\mm{Mat}(A,\bb v)=\mm{diag}(H_{r_1}(\la_1),...,H_{r_m}(\la_m)),$ 
where each $H_{r_i}(\la_i)\in \mm{M}_{r_i}(R)$ is a upper triangular matrix having all entries on the main diagonal $\la_i$ for each $1\le i\le m$.
Therefore, $$\mm{Mat}(\na,1\ot \bb v)=\mm{diag}(H_{r_1}(\la_1),...,H_{r_m}(\la_m))$$ which shows that $(E,\na)$ has a TLJ-decomposition over $\lau Rx$.
\end{proof}




	\begin{thm}[Logarithmic decomposition]
		\label{prp-31.03--2}
		Let $(E,\na)$ be an object in the category $\mc^{\mm{o}}\big(\lau Rx/R\big)$ which admits a TLJ-form over $\lau Rx$. Then there exists a unique decomposition $\na = \na^{\rm rs}+P_\na$ 			such that the following conditions are satisfied:
		\begin{itemize}
			\item[(i)] $(E,\na^{\rm rs})$ belongs to $\mcrs\big(\lau Rx/R\big)$ and admits a TLJ-form over $\lau Rx$.
			\item[(ii)] $P_\na\in \mm{End}_{\lau Rx}(E)$ is diagonalizable with all eigenvalues in $x^{-1}R[x^{-1}]$.
			\item[(iii)] Let $(E,\na^{\rm rs}) = \bigoplus\limits_{\vr\in \Sigma}(E_\vr,\na_\vr)$ be the TLJ-decomposition equipped with a TLJ-form over $\lau Rx$ of $(E,\na^{\rm rs})$, where $\Sigma\subset R/\ZZ$. For each ${\vr\in \Sigma}$, we have
			\begin{itemize}
				\item[(a)] $P_\na(E_\vr)\subset E_\vr$. 
				\item[(b)] There exists an $\lau Rx$--basis $(\bb e_\vr)$ of $E_{\vr}$, $c_\vr\in R$ and $f_{\mm \vr 1},...,f_{\mm \vr \ell}\in x^{-1}R[x^{-1}]$ such that
				\[
				\begin{cases}
					\mm{Mat}(\na^{\rm rs}|_{E_{\vr}},\bb e_\vr)=\mm J^\flat_{\mm r_\vr}(c_\vr)\\
					\mm{Mat}({\na}|_{E_{\vr}},\bb e_\vr)=\mm{diag}(\mm J^\flat_{\mm r_{\mm \vr1}}(c_\vr+f_{\mm \vr 1}),...,\mm J^\flat_{\mm r_{\mm \vr \ell}}(c_\vr+f_{\mm \vr \ell})).
				\end{cases}
				\]
			\end{itemize}  
		\end{itemize}
	\end{thm}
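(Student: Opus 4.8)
The plan is to build the pair $(\na^{\rm rs},P_\na)$ explicitly out of the given Turrittin-Levelt-Jordan form, and then to force uniqueness from the fact that the irregular values are intrinsic. Since $(E,\na)\in\mc^{\mm o}(\lau Rx/R)$ is $R$-torsion free, Theorem \ref{10.12.2021--2} makes $E$ free over $\lau Rx$, and the hypothesis supplies a TLJ-decomposition $(E,\na)=\bigoplus_{\ph\in\Ph}(E_\ph,\na_\ph)$ with $\Ph\subset R[x^{-1}]/\ZZ$ together with bases realizing $\mm{Mat}(\na_\ph,\bb e_\ph)=\mm J^\flat_{\mm r_\ph}(f_\ph)$. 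Applying Proposition \ref{07052022--02} to each $\ph$, I would replace $f_\ph$ by a polynomial representative $h_\ph\in R[x^{-1}]$ and a basis $\bb e'_\ph$ so that $\mm{Mat}(\na_\ph,\bb e'_\ph)=\mm J^\flat_{\mm r_\ph}(h_\ph)$; writing $h_\ph=c_\ph+\mm p(h_\ph)$ with $\mm p(h_\ph)\in x^{-1}R[x^{-1}]$ and $c_\ph\in R$, I would rescale the basis by a power of $x$ so that $\ov{c_\ph}\in\tau$ (this changes $h_\ph$ only by an integer, hence not the character). I then define $P_\na$ to act on each $E_\ph$ as multiplication by $\mm p(h_\ph)$, and set $\na^{\rm rs}:=\na-P_\na$.

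Next I would verify (i)--(iii). By construction $P_\na$ is $\lau Rx$-linear, diagonal in $\bigcup_\ph\bb e'_\ph$ with eigenvalues $\mm p(h_\ph)\in x^{-1}R[x^{-1}]$, which is (ii); and on $E_\ph$ one has $\mm{Mat}(\na^{\rm rs},\bb e'_\ph)=\mm J^\flat_{\mm r_\ph}(c_\ph)$, a constant matrix over $R$, so $\ce:=\bigoplus_\ph\pos Rx\,\bb e'_\ph$ is a $\na^{\rm rs}$-stable logarithmic lattice and $(E,\na^{\rm rs})=\ga(\ce,\na^{\rm rs})\in\mcrs(\lau Rx/R)$ carries an evident TLJ-form, giving (i). For (iii) I group $\Ph$ by $\vr=[c_\ph]\in R/\ZZ=:\Sigma$ and set $E_\vr=\bigoplus_{[c_\ph]=\vr}E_\ph$. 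The elementary but crucial observation is that if $[c_\ph]=[c_{\ph'}]$ while $\ov{c_\ph},\ov{c_{\ph'}}\in\tau$, then $c_\ph-c_{\ph'}\in\ZZ$ reduces into $\tau\cap\ZZ=\{0\}$, hence $c_\ph-c_{\ph'}\in\ZZ\cap\g r=\{0\}$ and $c_\ph=c_{\ph'}=:c_\vr$. Therefore $\na^{\rm rs}|_{E_\vr}$ is a single block $\mm J^\flat_{\mm r_\vr}(c_\vr)$ (a block-diagonal of $\mm J^\flat$'s with equal diagonal is again a $\mm J^\flat$), while $\na|_{E_\vr}=\mm{diag}\big(\mm J^\flat_{\mm r_{\vr i}}(c_\vr+\mm p(h_{\ph_i}))\big)$, yielding (a) and (b) with $f_{\vr i}=\mm p(h_{\ph_i})$.

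For uniqueness, suppose $\na=\dot\na+\dot P$ also satisfies (i)--(iii). Condition (iii) exhibits, on each summand of the $\dot\na$-decomposition, the matrix $\mm{diag}(\mm J^\flat(c_\ga+f_\ga^i))$ for $\na$, so every block $\mm J^\flat_r(c_\ga+f_\ga^i)$ is generated by $\ker_{\lau Rx}(\na-(c_\ga+f_\ga^i))^r$ and is an isotypical piece of $(E,\na)$ of principal part $\mm p(c_\ga+f_\ga^i)=f_\ga^i$, on which $\dot P$ is multiplication by $f_\ga^i$. Since the set of irregular values and the associated principal-part isotypical pieces are determined by $(E,\na)$ alone (Remark \ref{21.03.2022--1}), the operator $\dot P$ must act as multiplication by $\mm p(\ph)$ on $E_\ph$ for every character; that is $\dot P=P_\na$, whence $\dot\na=\na-\dot P=\na^{\rm rs}$.

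The main obstacle I anticipate is precisely this uniqueness step. Over $\lau Cx$ the analogous argument (Theorem \ref{section2--12}) rests on the uniqueness of the Jordan-Levelt decomposition (Theorem \ref{section2--4}), which is available only over the \emph{field} $\lau Cx$ and has no literal counterpart over the ring $\lau Rx$; my workaround bypasses Jordan-Levelt entirely and forces $\dot P$ through the intrinsic irregular values. The delicate point is to justify that the principal-part isotypical decomposition is genuinely intrinsic over $\lau Rx$; if a direct argument proves awkward, I would reduce modulo $\g r^{\ell+1}$, invoke the Artinian-coefficient uniqueness of Theorem \ref{section3--3} at each level, and pass to the limit using the completeness of $R$. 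A secondary point requiring care is the merging of constants inside one $R/\ZZ$-class, which depends exactly on $\ZZ\cap\g r=\{0\}$ together with the normalization $\ov{c_\ph}\in\tau$.
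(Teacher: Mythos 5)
Your existence construction is essentially the paper's: both apply Proposition \ref{07052022--02} to normalize each isotypical block to $\mm J^\flat_{\mm r_\ph}(h_\ph)$ with $h_\ph\in R[x^{-1}]$, define $P_\na$ as multiplication by $\mm p(h_\ph)$ on $E_\ph$, set $\na^{\rm rs}=\na-P_\na$, and read off (i)--(iii) from the resulting constant matrices. Your direct regrouping of $\Ph$ by classes in $R/\ZZ$ --- using $\tau\cap\ZZ=\{0\}$ and $\ZZ\cap\g r=\{0\}$ to merge the constants within one class --- replaces the paper's identification of the two decompositions $\bigoplus_\vr E_\vr$ and $\bigoplus_\ph E_\ph$ after base change to $\lau Kx$, and is if anything more transparent. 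Where you genuinely diverge is uniqueness. The paper simply extends scalars to the fraction field $K$, observes that both decompositions become logarithmic decompositions of $(E_{\lau Kx},\na)$ in the sense of Theorem \ref{section2--12}, invokes the uniqueness clause there, and restricts back to $E$. You instead try to pin down $\dot P$ as multiplication by the principal part on each isotypical piece, citing Remark \ref{21.03.2022--1} for the intrinsicness of the irregular values; but that remark is stated only for connections over $\lau Cx$, so as written your argument has a gap at exactly the point you yourself flag as delicate --- the intrinsicness of the principal-part isotypical decomposition over $\lau Rx$ is not established in the paper. The natural way to close it is to base-change to $\lau Kx$, where the isotypical decomposition is intrinsic by the classical theory, and then intersect back with $E$ using that the $E_\ph$ are direct summands; at that point your argument collapses into the paper's scalar-extension proof. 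Your fallback (reduce modulo $\g r^{\ell+1}$, use the uniqueness of Theorem \ref{section3--3} levelwise, pass to the limit by completeness) would also work but is heavier than either route. In short: same existence proof, a workable but incompletely justified uniqueness argument whose cleanest completion is precisely the paper's one-line reduction to the field case.
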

	\begin{proof} \textit{Existence.}  By Proposition \ref{07052022--02}, there is $\Ph\subset R[x^{-1}]/\ZZ$ and a decomposition $(E,\na)=\bigoplus\limits_{\ph\in \Ph} (E_{\ph},\na_{\ph})$
	such that $$\mm{Mat}(\na_{\ph}, \bb e_\ph)=\mm J^\flat_{\mm r_\ph}(f_\ph)$$ for each $\ph\in \Ph$, where $f_\ph\in R[x^{-1}]$ satisfies $c_\ph=f_\ph-\mm p(\ph)\in \tau$ and $(\bb e_\ph)$ is an $\lau R{x}$--basis of	$E_{\ph}$. For each $\ph\in\Ph$, let $\na_\ph^{\rm rs}=\na_\ph-\mm p(\ph)$. Then, $(E_\ph,\na_\ph^{\rm rs})$ is an object in $\mcrs\big(\lau Rx/R\big)$. It is obvious to see that $$\mm{Mat}(\na_\ph^{\rm rs}, \bb e_\ph)=\mm J^\flat_{\mm r_\ph}(c_\ph).$$ 
	Let us define the connection $\na^{\rm rs}$ on $E$ which is defined by $\na_{\ph}^{\rm rs}$ on $E_\ph$ for each $\ph\in\Ph$.  It is not difficult to see that $(E,\na^{\rm rs})$ admits a TLJ-form over $\lau Rx$. It is obvious that $P_\na=\na-\na^{\rm rs}$ is a diagonalizable $\lau R{x}-$linear endomorphism of $E$ with all eigenvalues in $x^{-1}R[x^{-1}]$.

	Consider the TLJ-decomposition $(E,\na^{\rm rs}) = \bigoplus\limits_{\vr\in \Sigma}(E_\vr,\na_\vr)$ over $\lau Rx$ of $(E,\na^{\rm rs})$. Let us fix a representative $c_\vr$ in $\tau$ of $\vr$. Note that $E_\vr$ is generated by $\bigcup\limits_{k\ge 1}\ker_{\lau Rx}\big(\na-c_\vr\big)^k$  over $\lau Rx$, and moreover
		$$(E_{\lau Kx},\na^{\rm rs}) = \bigoplus\limits_{\vr\in \Sigma}(\lau Kx\otu{\lau Rx}E_\vr,\na_\vr)$$ is the TLJ-decomposition of $(E_{\lau Kx},\na^{\rm rs})$ over $\lau Kx$. On the other hand, the isotypical component $E_\ph$ is generated by $\bigcup\limits_{k\ge 1}\ker_{\lau Rx}\big(\na-c_\ph\big)^k$ over $\lau Rx$ for each $\ph\in \Ph$, and the TLJ-decomposition
		$$(E_{\lau Kx},\na^{\rm rs}) = \bigoplus\limits_{\ph\in \Ph}(\lau Kx\otu{\lau Rx}E_\ph,\na_\ph^{\rm rs})$$ of $(E_{\lau Kx},\na^{\rm rs})$ exists over $\lau Kx$, where $\na_{\ph}^{\rm rs}=\na^{\rm rs}|_{E_\ph}$ for each $\ph\in \Ph$.  Because all $c_\vr$ and $c_\ph$ belong to $\tau$, each $e\in E_\vr$ belongs to $(\lau Kx\otu{\lau Rx}E_\ph)\cap E=E_\ph$ for some $\ph\in\Ph$. Conversely, the uniqueness of the TLJ-decomposition in $\mc\big(\lau Kx/K\big)$ implies that the isotypical component $E_\ph$ is contained in $E_\vr$. Therefore, 
		$\bigoplus\limits_{\vr\in \Sigma}(E_\vr,\na_\vr)$ and $\bigoplus\limits_{\ph\in \Ph} (E_{\ph},\na_{\ph}^{\rm rs})$ coincide. By construction, $(E,\na^{\rm rs})$ and $P_\na$ satisfy (i)-(iii).

		\textit{Uniqueness.} Assume that $\na=\dot\na +\dot P$ and $\na=\na^{\rm rs}+P_\na$ are logarithmic decompositions of $(E,\na)$. By scalar extension of these decompositions, they are both logarithmic decompositions of $(E_{\lau Kx},\na)$ in the sense of Theorem \ref{section2--12}. Hence, the uniqueness of the logarithmic decomposition implies that
		\[(E_{\lau Kx},\dot\na)=(E_{\lau Kx},\na^{\rm rs}).\]
		By restricting to $E$ we obtain that $\dot\na=\na^{\rm rs}$. The uniqueness is proved.
	\end{proof}

	\begin{prp}\label{prp.20220715--01} Let $(E,\na)\in\mc^{\mm{o}}\big(\lau Rx/R\big)$ admit a TLJ-form over $\lau Rx$ and $\ce\in\mclog\big(\pos Rx/R\big)$ be a Deligne-Manin lattice for $(E,\na^{\rm rs})$  with respect to $\tau$ (cf. \cite[Theorem 9.1]{HdST22}). Then, there exists a decomposition $\ce=\bigoplus\limits_{\ph \in \Ph} \ce_{\ph}$ such that $\ce_{\ph}$ 			is a Deligne-Manin lattice  for $(E_{\ph},\na_{\ph}^{\rm rs})$ having all exponents in $\tau$. \qed
	\end{prp}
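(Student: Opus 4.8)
The plan is to deduce the decomposition of $\ce$ from the rigidity of Deligne-Manin lattices, in the same spirit as the proof of Proposition \ref{section2--14}; the simplification here is that, because $(E,\na)$ admits a TLJ-form over $\lau Rx$, everything already takes place over $\lau Rx$ and no Galois descent along a ramified cover is needed. First I would record, from the proof of Theorem \ref{prp-31.03--2}, that the hypothesis forces the Turrittin index to be $1$ and yields an honest decomposition $(E,\na^{\rm rs})=\bigoplus\limits_{\ph\in\Ph}(E_\ph,\na_\ph^{\rm rs})$ in $\mcrs(\lau Rx/R)$, with $\na_\ph^{\rm rs}=\na_\ph-\mm p(\ph)$ and each $(E_\ph,\na_\ph^{\rm rs})$ regular singular.

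Next, for each $\ph\in\Ph$ I would select a Deligne-Manin lattice $\ce_\ph$ of $(E_\ph,\na_\ph^{\rm rs})$ with all exponents in $\tau$, whose existence is furnished by \cite[Theorem 9.1]{HdST22}. I would then form $\bigoplus\limits_{\ph\in\Ph}\ce_\ph$ and verify that it is again a Deligne-Manin lattice for $(E,\na^{\rm rs})$: it is a finite free $\pos Rx$-module stable under $\na^{\rm rs}$, hence a logarithmic model, and its reduced residue is the block sum of the reduced residues of the summands, so its exponents form the union of the sets $\mm{Exp}(\na_\ph^{\rm rs})$ and therefore all lie in $\tau$. Finally, invoking the full faithfulness of $\ga$ on Deligne-Manin lattices over $R$ --- the lifting statement \cite[Theorem 6.8]{HdST22}, equivalently the bijectivity on endomorphism algebras of \cite[Theorem 7.8]{HdST22} applied to $\ce\op\bigoplus_\ph\ce_\ph$ --- I would lift $\id_E$ to an isomorphism $\ce\simeq\bigoplus\limits_{\ph\in\Ph}\ce_\ph$ in $\mclog(\pos Rx/R)$ and transport the canonical idempotents of the right-hand side back to $\ce$, which produces the asserted decomposition with each summand a Deligne-Manin lattice for $(E_\ph,\na_\ph^{\rm rs})$.

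The step I expect to be the crux is this last rigidity input: one needs the restriction of $\ga$ to Deligne-Manin lattices to be fully faithful over the complete discrete valuation ring $R$, not merely over $C$. Since \cite{HdST22} develops regular singular connections over any Henselian Noetherian local $C$-algebra, the comparison of Hom-spaces is available $\g r$-adically and the identity lifts; the remaining verifications --- freeness of the direct sum, the block form of the reduced residue, and the stability of $\tau$ under taking unions of exponent sets --- are then routine.
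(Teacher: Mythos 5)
Your argument is correct and is essentially the proof the paper intends: the proposition is stated with no written proof precisely because it is the unramified, $R$-linear analogue of Proposition \ref{section2--14}, whose proof likewise takes Deligne--Manin lattices $\ce_\ph$ of the isotypical components and lifts $\id_E$ to an isomorphism $\ce\simeq\bigoplus_{\ph}\ce_\ph$ via the rigidity/lifting results of \cite{HdST22}. Your extra observations (Turrittin index $1$ from the TLJ-form hypothesis, the block form of the reduced residue, and the availability of the Hom-space comparison over the complete local ring $R$) are exactly the routine adaptations the paper is implicitly relying on.
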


	\subsection{Relation to limits}
	Denote by $(-)|_\ell$ the reduction modulo $\g r^{\ell+1}$ of objects in $\mc^{\mm{o}}\big(\lau Rx/R\big)$.
	
	\begin{dfn}\label{section4--3} We let $\mcrs\big(\lau Rx/R\big)^\wedge$ stand for the category whose \item [{\it objects}] are families $\{\big((E_\ell,\na_\ell),\ph_\ell\big)\}_{\ell\in \NN}$, where 
		$(E_\ell,\na_\ell)\in\mc(\lau Cx/C)_{(R_\ell)}$ and 
		$\ph_\ell:(E_{\ell+1},\na_{\ell+1})|_\ell\to (E_\ell,\na_\ell)$
		are isomorphisms in $\mc(\lau Cx/C)_{(R_\ell)}$;
		\item[{\it arrows}] between $\{\big((E_\ell,\na_\ell),\ph_\ell\big)\}_{\ell\in \NN}$ and $\{\big((E'_\ell,\na'_\ell),\ph'_\ell\big)\}_{\ell\in \NN}$ are  compatible  sequences  $\{(E_\ell,\na_\ell)\to (E'_\ell,\na'_\ell)\}$ 	in $\prod\limits_\ell\hh{\mc_{(R_\ell)}}{(E_\ell,\na_\ell)}{(E'_\ell,\na'_\ell)}$.
	\end{dfn}

	%

	
	Let $\{\big((E_\ell,\na_\ell),\ph_\ell\big)\}_{\ell\in \NN}$ be an object in $\mcrs\big(\lau Rx/R\big)^\wedge$. Then, each $(E_\ell,\na_\ell)$ admits a Deligne-Manin lattice $(\ce_\ell,\na_\ell)$ in $\mclog\big(\pos Cx/C\big)_{(R_\ell)}$ such that
	\[(\ce_{\ell+1},\na_{\ell+1})|_{\ell}\simeq (\ce_\ell,\na_\ell).\]
	Since $\pos Rx$ is $\g r$--adically complete, the family $\{(\ce_\ell,\na_\ell)\}_{\ell\in\NN}$ defines a logarithmic connection $(\ce,\na):=\varprojlim\limits_\ell (\ce_\ell,\na_\ell)$ over $\pos Rx$. The limit in the sense of  \cite[Theorem 9.6]{HdST22} of $\{\big((E_\ell,\na_\ell),\ph_\ell\big)\}_{\ell\in \NN}$ is defined by
	\[\varprojlim\limits_\ell (E_\ell,\na_\ell):=\ga\big(\ce,\na\big),\]
	where $\ga$ is the functor \eqref{gama-func}. According to \cite[Theorem 9.6]{HdST22}, for each regular singular connection $(E,\na)$ in $\mcrs\big(\lau Rx/R\big)$ we obtain that
	\[(E,\na)\simeq\varprojlim\limits_\ell (E,\na)|_\ell.\]
	However, this isomorphism maybe not exist for connections in $\mc(\lau Rx/R)$. Let's mention {\cite[Counter-Example 9.3]{HdST22}}.

	\begin{ex}\label{section4--5}
		Let $E=\lau Rx \bb e$ and $\na{\bb e} =\dfrac{t}{x}\bb e$. For each $\ell \in\NN$, $(E,\na)|_\ell$  is trivial in $\mcrs\big(\lau Cx/C\big)_{(R_\ell)}$. Hence, $\{(E,\na)|_\ell\}_{\ell\in \NN}$ is trivial in $\mcrs\big(\lau Rx/R\big)^\wedge$. By taking the limit,  $\varprojlim\limits_{\ell} (E, \na)|_\ell$ is the trivial in $ \mcrs\big(\lau Rx/R\big)$. Hence, there \textit{doesn't exist} $(E,\na)\simeq\varprojlim\limits_\ell (E,\na)|_\ell$ because $(E,\na)$ is not regular singular.
	\end{ex}

	\begin{prp}
		\label{section4--7} 
		Let $(E,\na)\in\mc\big(\lau Rx/R\big)$ and $\na|_\ell^{\rm rs}$ is the regular singular part of $\na|_\ell$ for each $\ell\in \NN$. Then there exists a regular singular connection $(\dot E,\dot \na)$ in $\mcrs\big(\lau Rx/R\big)$ such that 
		$(\dot E,\dot \na)|_\ell = (E|_\ell,\na|_\ell^{\rm rs}).$
	\end{prp}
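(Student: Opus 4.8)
The plan is to realize $(\dot E,\dot\na)$ as the limit, in the sense recalled just before Example \ref{section4--5}, of the regular singular parts of the reductions $(E,\na)|_\ell$. First I would form, for each $\ell\in\NN$, the reduction $(E|_\ell,\na|_\ell)$. Since $R_\ell=R/\g r^{\ell+1}$ is a finite dimensional local $C$-algebra, $E|_\ell$ is a finite dimensional $\lau Cx$-vector space carrying an action of $R_\ell$, so $(E|_\ell,\na|_\ell)\in\mc(\lau Cx/C)_{(R_\ell)}$; moreover Proposition \ref{section4--4} shows all these connections share a single Turrittin index $s$, namely that of $(E|_0,\na|_0)$. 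Applying Theorem \ref{section3--3} with $\La=R_\ell$ then yields the logarithmic decomposition $\na|_\ell=\na|_\ell^{\rm rs}+P_{\na|_\ell}$ with $(E|_\ell,\na|_\ell^{\rm rs})\in\mcrs(\lau Cx/C)_{(R_\ell)}$.

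The crux is to show these regular singular parts are compatible with further reduction, i.e. to produce isomorphisms $\ph_\ell\colon (E|_{\ell+1},\na|_{\ell+1}^{\rm rs})|_\ell\arou\sim (E|_\ell,\na|_\ell^{\rm rs})$ in $\mc(\lau Cx/C)_{(R_\ell)}$. Reducing the $(\ell+1)$-th decomposition modulo $\g r^{\ell+1}$ gives $\na|_\ell=(\na|_{\ell+1}^{\rm rs})|_\ell+(P_{\na|_{\ell+1}})|_\ell$, and I would argue that this is again a logarithmic decomposition of $(E|_\ell,\na|_\ell)$ in the sense of Theorem \ref{section3--3}, so that its uniqueness clause forces the reduced pieces to agree with $\na|_\ell^{\rm rs}$ and $P_{\na|_\ell}$ up to isomorphism. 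Verifying the hypotheses has two parts: that $(\na|_{\ell+1}^{\rm rs})|_\ell$ is still regular singular, which holds because a logarithmic model reduces to a logarithmic model and $\ga$ commutes with reduction modulo $\g r^{\ell+1}$; and that $(P_{\na|_{\ell+1}})|_\ell$ stays $\lau{R_\ell}x$-linear, diagonalizable over $\laus Cx$ with eigenvalues in $x_s^{-1}C[x_s^{-1}]$. For the second point I would use that the eigenspace decomposition of $(E|_{\ell+1})_{(s)}$ under $P_{\na|_{\ell+1}}$ is $\laus Cx$-linear and descends along the surjection $(E|_{\ell+1})_{(s)}\twoheadrightarrow (E|_\ell)_{(s)}$: the images of the eigenspaces span, land in the corresponding eigenspaces of the reduced operator, and remain in direct sum because the eigenvalues are distinct elements of $x_s^{-1}C[x_s^{-1}]$. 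The restriction of the uniqueness isomorphism to the regular singular factors then furnishes $\ph_\ell$, so that $\{((E|_\ell,\na|_\ell^{\rm rs}),\ph_\ell)\}_{\ell\in\NN}$ is an object of $\mcrs(\lau Rx/R)^\wedge$.

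Finally I would pass to the limit. By Theorem \ref{section3--4} each $(E|_\ell,\na|_\ell^{\rm rs})$ admits a Deligne-Manin lattice $(\ce_\ell,\na_\ell)\in\mclog(\pos Cx/C)_{(R_\ell)}$, and uniqueness of such lattices with exponents in $\tau$ yields the compatibilities $(\ce_{\ell+1},\na_{\ell+1})|_\ell\simeq(\ce_\ell,\na_\ell)$. Because $\pos Rx$ is $\g r$-adically complete, the family assembles into a logarithmic connection $(\ce,\na):=\varprojlim_\ell(\ce_\ell,\na_\ell)$ over $\pos Rx$, and I set $(\dot E,\dot\na):=\ga(\ce,\na)$, which lies in $\mcrs(\lau Rx/R)$. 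Since both $\ga$ and $\varprojlim$ commute with reduction modulo $\g r^{\ell+1}$, one obtains $(\dot E,\dot\na)|_\ell\simeq\ga(\ce_\ell,\na_\ell)\simeq(E|_\ell,\na|_\ell^{\rm rs})$, as required.

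I expect the compatibility step of the second paragraph to be the main obstacle: it is precisely there that the uniqueness of the logarithmic decomposition over the Artinian rings $R_\ell$, and the stability of both the regular singular and the diagonalizable factors under $\g r$-adic reduction, must be established carefully, whereas the passage to the limit is then a formal consequence of the completeness arguments already recalled before Example \ref{section4--5}.
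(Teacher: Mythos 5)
Your proposal is correct and follows essentially the same route as the paper: reduce modulo $\g r^{\ell+1}$, apply Theorem \ref{section3--3} to get the logarithmic decompositions over each $R_\ell$, check compatibility of the regular singular parts under further reduction, and pass to the limit via the Deligne--Manin lattices and the $\g r$-adic completeness of $\pos Rx$. The paper's own proof is much terser --- it simply asserts the compatibility $(E|_{\ell+1},\na|_{\ell+1}^{\rm rs})|_\ell=(E|_\ell,\na|_\ell^{\rm rs})$ as a consequence of Theorem \ref{section3--3} --- whereas you spell out the uniqueness argument behind it; this is a difference of detail, not of method.
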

	\begin{proof}
		According to Theorem \ref{section3--3}, we obtain that $\{(E|_\ell,\na|_\ell^{\rm rs})\}_{\ell \in \NN}$ is an object in $\mc_{\rm {rs}}(\lau Rx/R)^\wedge$ because $(E|_{\ell+1},\na|_{\ell+1}^{\rm rs})|_\ell=(E|_\ell,\na|_\ell^{\rm rs})$ for each $\ell\in \NN$. Hence, the limit $(\dot E,\dot \na)=\varprojlim\limits_\ell (E|_\ell,\na|_\ell^{\rm rs})$ is an object in $\mcrs(\lau Rx/R)$. 
	\end{proof}

	We now arrive at
	\begin{thm} \label{thm.31.03--1}
		Let $(E,\na)$ be an object in $\mc^{\mm{o}}\big(\lau Rx/R\big)$ which admits a TLJ-form over $\lau Rx$.  Then, $(E,\na^{\rm rs})$ and $(\dot E,\dot \na)$ are isomorphic.
	\end{thm}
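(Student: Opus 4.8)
The plan is to show that the two regular singular connections $(E,\na^{\rm rs})$ and $(\dot E,\dot\na)$ have the same reduction modulo every power of $\g r$, and then to recover both from those reductions by means of the completeness statement \cite[Theorem 9.6]{HdST22}. By Proposition \ref{section4--7} we already know $(\dot E,\dot\na)|_\ell=(E|_\ell,\na|_\ell^{\rm rs})$ for every $\ell$, so the whole theorem reduces to the single compatibility assertion that the regular singular part produced over $R$ in Theorem \ref{prp-31.03--2} commutes with reduction, i.e. that $(E,\na^{\rm rs})|_\ell\simeq(E|_\ell,\na|_\ell^{\rm rs})$ in $\mcrs(\lau Cx/C)_{(R_\ell)}$ for all $\ell$. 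Note that the hypothesis of a TLJ-form over $\lau Rx$ forces the unramified case $s=1$, so throughout $\laus Cx=\lau Cx$.

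\textbf{Step 1.} First I would fix $\ell$ and reduce the decomposition $\na=\na^{\rm rs}+P_\na$ of Theorem \ref{prp-31.03--2} modulo $\g r^{\ell+1}$, obtaining $\na|_\ell=(\na^{\rm rs})|_\ell+(P_\na)|_\ell$ in $\mc(\lau Cx/C)_{(R_\ell)}$, and then identify this with the logarithmic decomposition of $\na|_\ell$ furnished by Theorem \ref{section3--3} via its uniqueness clause. This requires verifying the three defining conditions for the reduced pair. For (i), that $(E|_\ell,(\na^{\rm rs})|_\ell)$ is regular singular with its $R_\ell$-action: I would take a Deligne-Manin lattice $\ce$ of $(E,\na^{\rm rs})$ over $\pos Rx$ (Proposition \ref{prp.20220715--01}) and reduce it modulo $\g r^{\ell+1}$ to a logarithmic model over $\pos{R_\ell}x$, using Theorem \ref{section3--4} to keep track of the $R_\ell$-action and of the exponents remaining in $\tau$. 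For (ii), that $(P_\na)|_\ell$ is $\lau{R_\ell}x$-linear and diagonalizable over $\lau Cx$: this is immediate, since in the TLJ-form of Theorem \ref{prp-31.03--2}(iii)(b) the matrix of $P_\na$ is already diagonal with entries in $x^{-1}R[x^{-1}]$, and both properties survive the ring map $R\to R_\ell$. For (iii), the explicit matrices of the TLJ-decomposition reduce termwise, the compatibility of the decomposition with the $R_\ell$-action being precisely Proposition \ref{section3--2}. Uniqueness in Theorem \ref{section3--3} then gives $(\na^{\rm rs})|_\ell=\na|_\ell^{\rm rs}$ (and $(P_\na)|_\ell=P_{\na|_\ell}$).

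\textbf{Step 2.} With Step 1 established, the families $\{(E,\na^{\rm rs})|_\ell\}_\ell$ and $\{(E|_\ell,\na|_\ell^{\rm rs})\}_\ell$ agree as objects of $\mcrs(\lau Rx/R)^\wedge$ (Definition \ref{section4--3}). Since $(E,\na^{\rm rs})$ lies in $\mcrs(\lau Rx/R)$ by Theorem \ref{prp-31.03--2}(i), the completeness result \cite[Theorem 9.6]{HdST22} provides the canonical isomorphism recovering it from its reductions, and combining this with the definition of $(\dot E,\dot\na)$ from Proposition \ref{section4--7} yields
\[(E,\na^{\rm rs})\simeq\varprojlim_\ell(E,\na^{\rm rs})|_\ell=\varprojlim_\ell(E|_\ell,\na|_\ell^{\rm rs})=(\dot E,\dot\na),\]
which is the assertion.

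The main obstacle is condition (i) in Step 1: proving that the regular singular part formed over $R$ stays regular singular after reduction, together with a \emph{compatible} system of Deligne-Manin lattices over the $\pos{R_\ell}x$. This hinges on the fact that reduction modulo $\g r^{\ell+1}$ carries logarithmic models over $\pos Rx$ to logarithmic models over $\pos{R_\ell}x$ and commutes with the functor $\ga$ of \eqref{gama-func}, and on keeping the exponents in $\tau$ under reduction through Theorem \ref{section3--4}. By contrast, conditions (ii) and (iii) are essentially formal once the explicit TLJ-form of Theorem \ref{prp-31.03--2}(iii)(b) is invoked, so the real work is the lattice-theoretic bookkeeping needed to apply the uniqueness in Theorem \ref{section3--3} uniformly in $\ell$.
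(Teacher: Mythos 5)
Your overall strategy (reduce modulo $\g r^{\ell+1}$, match the reduction of $\na=\na^{\rm rs}+P_\na$ with the logarithmic decomposition of $\na|_\ell$ via uniqueness, then pass to the limit) is the same as the paper's, but Step 1 has a genuine gap at exactly the point you dismiss as ``immediate'', namely condition (ii). On an isotypical component the matrix of $P_\na$ is $\mm p(\ph)\mm I$ with $\mm p(\ph)=b_0+b_1t+b_2t^2+\cdots\in x^{-1}R[x^{-1}]$, so $(P_\na)|_\ell$ is indeed diagonal as an $\lau {R_\ell}x$--linear map; but Theorem \ref{section3--3}(ii) requires diagonalizability as a $\lau Cx$--linear endomorphism of $E|_\ell$, which is a $\lau Cx$--space of rank $(\ell+1)\mm r_\ph$. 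With respect to the basis $(t^i\bb e_\ph)_{i\le\ell}$ the matrix of $(P_\na)|_\ell$ is the lower block-triangular matrix $\mm B_{\ph,\ell}=b_0\mm I+N$, where the blocks of $N$ are the $b_i\mm I$ with $i\ge1$; since multiplication by $t$ is nilpotent on $E|_\ell$, $N$ is a nonzero nilpotent $\lau Cx$--linear operator whenever some $b_i\ne0$ (e.g.\ $\na\bb e=\tfrac tx\bb e$ as in Example \ref{section4--5}). Hence $(P_\na)|_\ell$ is not semisimple over $\lau Cx$, the reduced decomposition $(\na^{\rm rs})|_\ell+(P_\na)|_\ell$ does not satisfy the hypotheses of the uniqueness clause of Theorem \ref{section3--3}, and the identity $(\na^{\rm rs})|_\ell=\na|_\ell^{\rm rs}$ you want is false on the nose: the $C$-level semisimple part is only $b_0\mm I$, and the nilpotent remainder $\mm B_{\ph,\ell}-b_0\mm I$, whose entries lie in $x^{-1}C[x^{-1}]$, must be absorbed into $\na|_\ell^{\rm rs}$.

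This absorbed nilpotent perturbation is precisely what the paper's proof handles: it shows by an explicit sequence of gauge transformations (the matrices $\mm P_1,\dots$, built from primitives $b'_i$ of the $b_i$ under $\vt$) that the connection with matrix $\mm A_{\ph,\ell}+(\mm B_{\ph,\ell}-b_0\mm I)$, which is $\dot\na_\ph|_\ell$, is isomorphic --- not equal --- to the one with matrix $\mm A_{\ph,\ell}$, which is $(\na_\ph^{\rm rs})|_\ell$. Your Step 2 then also needs repair: once the comparison in Step 1 is only an isomorphism for each $\ell$, you must check that these isomorphisms are compatible with the transition maps before taking $\varprojlim_\ell$; the paper does this by observing that the orders ${\rm ord}_x(b_i)$ are bounded below, so the successive base changes converge $\g r$--adically to a base change over $\lau Rx$. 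In short, the missing idea is the proof that the nilpotent part of $(P_\na)|_\ell$ is a regular-singular perturbation that can be gauged away compatibly in $\ell$; without it, the appeal to uniqueness in Theorem \ref{section3--3} does not apply.
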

	\begin{proof} According to Proposition \ref{07052022--02}, there exists $\Ph\subset R[x^{-1}]/\ZZ$ and a decomposition $(E,\na)=\bigoplus\limits_{\ph\in \Ph} (E_{\ph},\na_{\ph})$
		such that $\mm{Mat}(\na_{\ph}, \bb e_\ph)=\mm J^\flat_{\mm r_\ph}(f_\ph)$
		for each $\ph\in \Ph$, where $f_\ph\in R[x^{-1}]$ is a representative of $\ph$ and $(\bb e_\ph)$ is an $\lau R{x}$--basis of 	$E_{\ph}$. Let $P_{\na_\ph}=P_\na|_{E_\ph}$ and $\na_\ph=\na|_{E_\ph}$. It is easy to see that 
		\[\mm{Mat}(P_{\na_\ph}, {\bb e}_\ph)= \mm p(\ph)\mm I_n, \quad \mm{Mat}(\na_\ph^{\rm rs}, {\bb e}_\ph)=\mm J^\flat_{\mm r_\ph}(f_\ph- \mm p(\ph)).\]
		Let write $a=f_\ph -\mm p(\ph)=a_0+a_1t+a_2t^2+\cdots$ and $\mm p(\ph)=b_0+b_1t+ b_2t^2+\cdots$.
		Notice that $f_\ph\in R[x^{-1}]$ and $a=f_\ph -\mm p(\ph)\in R$, so $a_i\in C$ and $b_i\in x^{-1}C[x^{-1}]$. In addition, ${\rm ord}_x(b_i)\ge{\rm ord}_x(f_\ph)$ 	for all $i\in \NN$.

		For each $\ell\in\NN$, consider the $\lau Cx$--basis $(t^i{\bb e}_\ph)_{i\le \ell}:=({\bb e}_\ph, t{\bb e}_\ph,\ldots, t^\ell{\bb e}_\ph)$ of $E|_\ell$. Let $\mm A_{\ph,\ell}=\mm{Mat}(\na_\ph^{\rm rs}|_\ell, t^i{\bb e}_\ph)$ and $\mm B_{\ph,\ell}=\mm{Mat}(P_{\na_\ph}|_\ell, t^i{\bb e}_\ph)$. We have 
		\[\mm A_{\ph,\ell}=
		\begin{pmatrix}
			\mm J^\flat_n(a_0)&0&\cdots&0&0
			\\
			a_1\mm I_n&\mm J^\flat_n(a_0)&\cdots&0&0
			\\
			\vdots&\vdots&\ddots&\vdots&\vdots
			\\
			a_{\ell-1}\mm I_n&a_{\ell-2}\mm I_n&\cdots&\mm J^\flat_n(a_0)&0
			\\
			a_\ell \mm I_n&a_{\ell-1}\mm I_n&\cdots&a_1\mm I_n&\mm J^\flat_n(a_0) 
		\end{pmatrix}\]
		and
		\[\mm B_{\ph,\ell}=
		\begin{pmatrix}
			b_0\mm I_n&0&\cdots&0&0
			\\
			b_1\mm I_n&b_0\mm I_n&\cdots&0&0
			\\
			\vdots&\vdots&\ddots&\vdots&\vdots
			\\
			b_{\ell-1}\mm I_n&b_{\ell-2}\mm I_n&\cdots&b_0\mm I_n&0
			\\
			b_\ell \mm I_n&b_{\ell-1}\mm I_n&\cdots&b_1\mm I_n&b_0\mm I_n 
		\end{pmatrix}.\]
		Hence, we obtain that 
		\begin{align*}
			\mm{Mat}(\na_\ph|_\ell, t^i\bb e_\ph)=&\mm A_{\ph,\ell}+\mm B_{\ph,\ell}	=b_0\mm I_{(\ell+1)n}+\mm A_{\ph,\ell}+(\mm B_{\ph,\ell}-b_0\mm I_{(\ell+1)n}).
		\end{align*}

		According to Proposition \ref{section3--2} and Theorem \ref{section3--3}, each $E_\ph$ is invariant under $\dot\na|_\ell$ since $\dot\na|_\ell=\na|_\ell^{\rm rs}$. The restriction $\dot\na_{\ph}|_\ell$ of $\dot\na|_\ell$ is the connection on $E_\ph|_\ell$  defined by the matrix $\mm A_{\ph,\ell}+(\mm B_{\ph,\ell}-b_0I)$ with respect to $(t^i{\bb e}_\ph)_{i\le \ell}$. Let $({\bb e_\ph^{(1)}})$ be the $\lau Cx$-basis of $E_\ph|_\ell$ which is base changed from $(t^i{\bb e}_\ph)_{i\le \ell}$ by
		\[\mm P_1=
		\begin{pmatrix}
			\mm I_n&0&\cdots&0&0
			\\
			0&\mm I_n&\cdots&0&0
			\\
			\vdots&\vdots&\ddots&\vdots&\vdots
			\\
			0&0&\cdots&\mm I_n&0
			\\
			b'_\ell \mm I_n&0&\cdots&0&\mm I_n 
		\end{pmatrix},\]
		where $b'_\ell\in x^{-1}C[x^{-1}]$ satisfies $b_\ell+\vt(b'_\ell)=0$. The matrix of $\dot\na_{\ph}|_\ell$ with respect to $({\bb e_\ph^{(1)}})$ is $$\mm B_{\ph,\ell}^{(1)}:=P_1^{-1}(\mm B_{\ph,\ell}-b_0\mm I_{(\ell+1)n})P_1+P_1^{-1}\mm A_{\ph,\ell} P_1+P_1^{-1}\vt(P_1).$$ 
		Note that 
		
		$$\mm P_1^{-1}=
		\begin{pmatrix}
			\mm I_n&0&\cdots&0&0
			\\
			0&\mm I_n&\cdots&0&0
			\\
			\vdots&\vdots&\ddots&\vdots&\vdots
			\\
			0&0&\cdots&\mm I_n&0
			\\
			-b'_\ell \mm I_n&0&\cdots&0&\mm I_n 
		\end{pmatrix}.$$ 
	By computing, we obtain $\mm P_1^{-1}\mm A_{\ph,\ell}\mm P_1=\mm A_{\ph,\ell}; \mm P_1^{-1}(\mm B_{\ph,\ell}-b_0\mm I_{(\ell+1)n})\mm P_1=\mm B_{\ph,\ell}-b_0\mm I_{(\ell+1)n},$	and
		\[\mm P_1^{-1}\vt(\mm P_1)=\begin{pmatrix}
			0&0&\cdots&0&0
			\\
			0&0&\cdots&0&0
			\\
			\vdots&\vdots&\ddots&\vdots&\vdots
			\\
			0&0&\cdots&0&0
			\\
			\vt (b'_\ell)\mm I_n&0&\cdots&0&0 
		\end{pmatrix}.\]
		Therefore, the matrix of $\dot\na_{\ph}|_\ell$ with respect to the basis $({\bb e_\ph^{(1)}})$ is
		\[\mm B_{\ph,\ell}^{(1)}=\mm A_{\ph,\ell}+ \begin{pmatrix}
			0&0&\cdots&0&0
			\\
			b_1\mm I_n&0&\cdots&0&0
			\\
			\vdots&\vdots&\ddots&\vdots&\vdots
			\\
			b_{\ell-1}\mm I_n&b_{\ell-2}\mm I_n&\cdots&0&0
			\\
			0&b_{\ell-1}\mm I_n&\cdots&b_1\mm I_n&0 
		\end{pmatrix}.\]			
		Continuing this procedure, there exists an $\lau {C}x$--basis $({\bb e_\ph^{(\ell)}})$ of $E_\ph|_\ell$ such that $\mm{Mat}(\dot\na_{\ph}|_\ell, {\bb e_\ph^{(\ell)}})=\mm A_{\ph,\ell}$. On the other hand, $\mm A_{\ph,\ell}=\mm{Mat}(\na_\ph^{\rm rs}|_\ell, t^i{\bb e}_\ph).$ Hence, $$\mm{Mat}(\dot\na_{\ph}|_\ell, {\bb e_\ph^{(\ell)}})=\mm{Mat}(\na_\ph^{\rm rs}|_\ell, (t^i{\bb e}_\ph)_{i\le \ell}).$$

		By proceeding with all isotypical components, there exists a $\lau Cx$--basis $(\bb e^{(\ell)})$ of $E|_\ell$ such that $\mm{Mat}(\dot \na_\ell, {\bb e^{(\ell)}})=\mm{Mat}(\na^{\rm rs}|_\ell, t^i{\bb e}),$ 			where $\dot \na_\ell$ is the connection on $E|_\ell$ which is defined by $\dot\na_{\ph,\ell}$ on each $E_\ph|_\ell$. Therefore, we obtain that
		\[(\dot E,\dot \na)|_\ell\simeq (E,\na^{\rm rs})|_\ell.\]			
		Because the numbers ${\rm ord}_x(b_i)$ are bounded from below, so are ${\rm ord}_x(b'_i)$. By constructing, the base changes above are compatible with taking the limit. This shows that there exists an $\lau {R}x-$basis $(\bb e^{(\infty)})$ of $\dot E$ such that $\mm{Mat}(\dot\na, \bb e^{(\infty)})=\mm{Mat}(\na^{\rm rs}, \bb e)$. 
		In particular, we obtain that $(\dot E,\dot\na)\simeq (E,\na^{\rm rs}).$
	\end{proof}


\end{document}